\documentclass[print,wide]{draft}

\usepackage{mathtools}
\usepackage{microtype}
\usepackage{enumitem}

\hypersetup{
hidelinks,
pdftitle={Null Controllability of a Stochastic Parabolic Equation with a Space-Dependent Analytic Noise Coefficient},
pdfauthor={Qi Lu and Yu Wang},
pdfkeywords={stochastic parabolic equation, null controllability, approximate controllability, state-only observability, flat torus, analytic coefficient}
}

\begin{document}

\title{Null Controllability of a Stochastic Parabolic Equation with a Space-Dependent Analytic Noise Coefficient}

\author{Qi L\"u\thanks{School of Mathematics, Sichuan University, Chengdu 610064, China. E-mail: \texttt{lu@scu.edu.cn}. Supported by the National Natural Science Foundation of China under grant 12025105.}
\and
Yu Wang\thanks{School of Mathematics, Southwest Jiaotong University, Chengdu 611756, China. E-mail: \texttt{yuwangmath@163.com}. Supported by the National Natural Science Foundation of China under grant 12401589 and by Sichuan Science and Technology Program under grant 2026NSFSC0777.}}

\date{}
\maketitle

\begin{abstract}
We consider a stochastic parabolic equation on the $n$-dimensional flat torus
$\mathbb{T}^n=(\mathbb{R}/(2\pi\mathbb Z))^n$, $n\in\mathbb N$, whose only control acts in the
drift.  The multiplicative-noise coefficient is
adapted and may depend jointly on the sample point, time, and space.  Its
spatial profiles are assumed to be real analytic with a uniform positive
radius of analyticity, while the corresponding analytic norm is only required
to be square integrable in time, uniformly along sample paths.  We prove a
state-only observability inequality with an $L^1$-in-time mean-square
observation norm and deduce null controllability from every measurable spatial
set of positive measure by one adapted drift control.  The same one-time interpolation
estimate also yields approximate controllability to arbitrary square-integrable
random terminal targets.  The proof is based on a direct analytic
energy estimate for the adjoint state, followed by propagation of smallness
and a telescoping argument.
\end{abstract}

\medskip
\noindent\textbf{Keywords.}
Stochastic parabolic equation; null controllability;  
approximate controllability; observability; analytic energy estimate.

\medskip
\noindent\textbf{2020 Mathematics Subject Classification.}
93B05, 93B07.

\section{Introduction}\label{sec:introduction}


Let \(T>0\), and set  
\(\mathbb{T}^n=(\mathbb{R}/(2\pi\mathbb{Z}))^n\) \((n\in\mathbb{N})\).  
Let \((\Omega,\mathcal{F},\{\mathcal{F}_t\}_{t\in[0,T]},\mathbb{P})\) be a complete filtered probability space carrying a one-dimensional Brownian motion \(W(\cdot)\).  
We assume that \(\{\mathcal{F}_t\}_{t\in[0,T]}\) is the usual augmentation of the natural filtration of \(W(\cdot)\), and write \(\mathbb{F}\) for the progressive \(\sigma\)-algebra.  
For a separable Hilbert space \(H\), let \(L^2_{\mathcal{F}_t}(\Omega;H)\) denote the space of \(\mathcal{F}_t\)-measurable \(H\)-valued random variables \(\xi\) satisfying \(\mathbb{E}\|\xi\|_H^2<\infty\).  
The space \(L^2_{\mathbb{F}}(0,T;H)\) consists of progressively measurable \(H\)-valued processes \(X\) satisfying  
$
\mathbb{E}\int_0^T\|X(t)\|_H^2\,dt<\infty,
$
and \(L^2_{\mathbb{F}}(\Omega;C([0,T];H))\) denotes the space of adapted \(H\)-valued processes with continuous paths satisfying  
$
\mathbb{E}\sup_{0\leq t\leq T}\|X(t)\|_H^2<\infty.
$
We use \(L^\infty_{\mathbb{F}}(\Omega;L^2(0,T;H))\) for progressively measurable \(H\)-valued processes satisfying  
$
\operatorname*{ess\,sup}_{\omega\in\Omega}\int_0^T\|X(\omega,t)\|_H^2\,dt<\infty.
$
For \(r\in\{1,\infty\}\), we denote by \(L^r_{\mathbb{F}}(0,T;L^2(\Omega;H))\) the subspace of \(L^r(0,T;L^2(\Omega;H))\) consisting of equivalence classes with progressively measurable \(H\)-valued representatives, equipped with the inherited Bochner norm. Thus the time norm is taken after the \(L^2(\Omega;H)\) norm.  
For every complex Hilbert space used below, we denote its inner
product by $\langle\cdot,\cdot\rangle_H$ and take it to be linear
in the first variable. In particular,
\begin{align*}
\langle f,g\rangle_{L^2(\mathbb T^n)}
=\int_{\mathbb T^n}f(x)\overline{g(x)}\,dx.
\end{align*}
We use the same angle-bracket notation for real Hilbert spaces.

Let $G\subset\mathbb{T}^n$ be measurable with positive Lebesgue measure $|G|$.  We consider the following stochastic parabolic equation:
\begin{equation}\label{eq:forward}
\begin{cases}
\begin{aligned}
&d y=(\Delta y+\mathbf{1}_Gu)\,d t+b y\,d W(t)
&&\text{in }(0,T)\times\mathbb{T}^n,\\
&y(0)=y_0
&&\text{in }\mathbb{T}^n.
\end{aligned}
\end{cases}
\end{equation}
Here the initial datum $y_0\in L^2(\mathbb{T}^n)$, $b\in L^\infty_{\mathbb F}(\Omega;L^2(0,T;L^\infty(\mathbb{T}^n)))$ and the control
$u\in L^\infty_{\mathbb F}(0,T;L^2(\Omega;L^2(G)))$.

Classical well-posedness theory 
(e.g., \cite[Theorems~3.18 and~3.20]{Lue2021a}) ensures that \eqref{eq:forward} admits a unique mild solution
$
y\in L^2_{\mathbb F}(\Omega;C([0,T];L^2(\mathbb T^n)))
\cap L^2_{\mathbb F}(0,T;H^1(\mathbb T^n)).
$
Moreover, this solution satisfies the estimate
\begin{align}\label{eq:forward-solution-estimate}
\mathbb{E}\sup_{0\leq t\leq T}\|y(t)\|_{L^2(\mathbb T^n)}^2
+\mathbb{E}\int_0^T\|\nabla y(t)\|_{L^2(\mathbb T^n)}^2\,dt
\leq C\Bigl(\|y_0\|_{L^2(\mathbb T^n)}^2
+\mathbb{E}\int_0^T\|u(t)\|_{L^2(G)}^2\,dt\Bigr).
\end{align}
Throughout the paper,  we use $C>0$ for a constant whose value
may change from line to line, and subscripts for constants that remain fixed
within an argument.

We now recall the following definitions.

\begin{definition}[Null controllability]
Equation \eqref{eq:forward} is null controllable at time
$T$ if, for every $y_0\in L^2(\mathbb{T}^n)$, there exists
$u\in L^\infty_{\mathbb F}(0,T;L^2(\Omega;L^2(G)))$ such that
the corresponding solution to \eqref{eq:forward} satisfies
$y(T)=0$ in $L^2(\mathbb{T}^n)$, a.s.
\end{definition}

\begin{definition}[Approximate controllability]
Equation \eqref{eq:forward} is approximately controllable at time $T$ if,
for every $y_0\in L^2(\mathbb{T}^n)$,
$y_T\in L^2_{\mathcal{F}_T}(\Omega;L^2(\mathbb{T}^n))$, and $\varepsilon>0$, there exists
$u\in L^\infty_{\mathbb F}(0,T;L^2(\Omega;L^2(G)))$ such that the corresponding solution to
\eqref{eq:forward} satisfies
$\lVert y(T)-y_T\rVert_{L^2_{\mathcal{F}_T}(\Omega;L^2(\mathbb{T}^n))}<\varepsilon$.
\end{definition} 

The null controllability of deterministic parabolic equations has by now been studied extensively. A comprehensive list of references on this topic is beyond the scope of this paper; we refer interested readers to \cite{FernandezCara2012,LeRousseauLebeauRobbiano2022,Zuazua2007} and the extensive literature cited therein. By contrast, far fewer works address the controllability of stochastic parabolic equations (see \cite{Barbu2003,HernandezSantamaria2023,Lu2011,Liu2014,Liu2026,ZhangXuLiu2026,Tang2009}). The main difficulty lies in establishing the observability estimate for the dual system of a controlled stochastic parabolic equation. As pointed out in \cite{Barbu2003}, a backward stochastic parabolic equation contains both a state and a martingale integrand. The martingale integrand acts as a ``bad'' (nonhomogeneous) term when one attempts to prove the null/approximate controllability of stochastic parabolic equations via the global Carleman estimate. Consequently, two controls are typically required, one acting on the drift term and the other on the diffusion term, in order to establish the null and approximate controllability of stochastic parabolic equations (see \cite{HernandezSantamaria2023,Liu2014,ZhangXuLiu2026,Tang2009}). When $b$ is spatially independent, however, one can combine a spectral inequality with time iteration to establish the null and approximate controllability of \eqref{eq:forward} using only a drift control (see \cite{Lu2011,Yang2016a,Liu2026}).

In \cite{Lu2011,Yang2016a,Liu2026}, spatial independence plays a crucial role in these spectral arguments.   Multiplication by such a coefficient commutes with the spectral projectors of the Laplacian, so each projected adjoint equation involves only the corresponding frequencies of the martingale integrand. A space-dependent multiplier, however, generally mixes frequencies: the unobserved high-frequency part of the martingale integrand can enter the low-frequency equation for the state. Consequently, the projected equations no longer close in the form required by the scalar-coefficient argument.

In this paper, we establish null and approximate controllability for a class of spatially varying coefficients $b$. Our results allow the spatial control set to be an arbitrary measurable set of positive measure, while retaining the $L^1$-in-time observation norm used in the duality argument.  

Compared with the spatially independent single-control works \cite{Lu2011,Yang2016a,Liu2026}, we remove the restriction that the noise coefficient be spatially independent. Compared with the spatially dependent two-control works \cite{HernandezSantamaria2023,Liu2014,ZhangXuLiu2026,Tang2009}, we reduce the number of controls from two to one and obtain an \(L^1\)-in-time observability estimate from an arbitrary measurable set, at the cost of restricting the coefficient class to real-analytic functions with a uniform positive radius of analyticity. Section \ref{sec:richness} shows that this analytic class is still rich: it contains random profiles, profiles with infinitely many nonzero Fourier modes, and time-unbounded profiles whose multiplication operators generally mix Fourier modes. Thus, the present paper establishes a new trade-off between the number of controls and the regularity of the noise coefficient: within the analytic coefficient class, a single drift control suffices to overcome the frequency-mixing effect caused by spatially dependent multiplicative noise. We study the control problem on the torus \(\mathbb{T}^n\). The periodic setting plays a specific role in the proof: it makes complex translations compatible with Fourier projections and eliminates boundary contributions in the associated energy estimates. On domains with boundary, the compatibility of complex translations with the boundary conditions requires additional analysis, and the present argument does not apply directly. 

To handle the spatial dependence of the coefficients, we impose a uniform holomorphic extension condition on their spatial profiles. For each $R > 0$, we define the following complex strip and the corresponding space of holomorphic functions.

For $R>0$, set
$S_R^n=\{\zeta\in\mathbb{C}^n:|\operatorname{Im}\zeta_j|<R,\ j=1,\ldots,n\}$,
and write $\mathcal O(S_R^n)$ for the jointly holomorphic functions on $S_R^n$.
\begin{definition}[Periodic polystrip-analytic space]\label{def:analytic-space}
Let \(R>0\), and define
\[
H^\infty_{\mathrm{per}}(S_R^n)
=\Bigl\{F\in\mathcal O(S_R^n):
F(\zeta+2\pi\mathbf e_j)=F(\zeta)\ \text{for all } j=1,\dots,n,\ 
\|F\|_{H^\infty(S_R^n)}:=\sup_{\zeta\in S_R^n}|F(\zeta)|<\infty\Bigr\},
\]
where \(\mathbf e_j\) denotes the \(j\)-th standard basis vector of \(\mathbb R^n\).

Let \(\mathcal A_R(\mathbb T^n)\) denote the space of all real-valued functions on \(\mathbb T^n\) that arise as restrictions to \(\mathbb T^n\) of functions \(F\in H^\infty_{\mathrm{per}}(S_R^n)\).
\end{definition}
For \(f\in\mathcal A_R(\mathbb T^n)\), define
\begin{equation}\label{eq:analytic-space-norm}
\|f\|_{\mathcal A_R(\mathbb T^n)}
=\|F\|_{H^\infty(S_R^n)},
\end{equation}
where \(F\in H^\infty_{\mathrm{per}}(S_R^n)\) is an extension of \(f\) to \(S_R^n\). With this norm, \(\mathcal A_R(\mathbb T^n)\) is a real Banach space.

Let \(C^\omega(\mathbb T^n;\mathbb R)\) denote the space of real-valued real-analytic functions on \(\mathbb T^n\). Every such function belongs to \(\mathcal A_R(\mathbb T^n)\) for some \(R>0\), possibly depending on the function. Indeed, by local real analyticity and compactness of the torus, it admits a periodic holomorphic extension to a polystrip of positive width; after possibly reducing the width, this extension is bounded. Conversely, every function in \(\mathcal A_R(\mathbb T^n)\) is real analytic. Hence
\begin{align*}
C^\omega(\mathbb T^n;\mathbb R)
=\bigcup_{R>0}\mathcal A_R(\mathbb T^n).
\end{align*}

For the family \(b(\omega,t,\cdot)\), the following assumption imposes a common extension width and an \(L^2\)-in-time bound on the extension supremum norms that is uniform in \(\omega\).

\begin{assumption}
\label{ass:analytic-coefficient}
Let \(b:\Omega\times(0,T)\times\mathbb T^n\to\mathbb R\) be progressively measurable. We assume that there exists \(R>0\) such that, outside a fixed \((\mathbb P\otimes dt)\)-null set, \(b(\omega,t,\cdot)\in\mathcal A_R(\mathbb T^n)\).
For each such \((\omega,t)\), let \(\widetilde b(\omega,t,\cdot)\) denote the unique analytic extension of \(b(\omega,t,\cdot)\) to \(S_R^n\). We assume that these extensions admit an \(\mathbb F\otimes\mathcal B(S_R^n)\)-measurable representative
\[
\widetilde b:\Omega\times(0,T)\times S_R^n\to\mathbb C.
\]
For a.e. \((\omega,t)\), set
\begin{equation}\label{eq:beta-definition}
\beta(\omega,t):=\|b(\omega,t,\cdot)\|_{\mathcal A_R(\mathbb T^n)},
\end{equation}
and define \(\beta\) arbitrarily on the exceptional null set. We further assume that, for some \(C_1>0\),
\begin{equation}\label{eq:coefficient-budget}
\|\beta\|_{L^\infty_{\mathbb F}(\Omega;L^2(0,T))}\le C_1.
\end{equation}
\end{assumption}
\begin{remark}
Condition \eqref{eq:coefficient-budget} imposes neither time continuity nor time differentiability on \(b\), and allows \(\beta\) to be unbounded in time.
\end{remark}

The process \(\beta\) is progressively measurable.
Indeed, by \eqref{eq:analytic-space-norm}, together with continuity and periodicity, it agrees with the supremum of \(|\widetilde b|\) over a fixed countable dense subset of \([0,2\pi]^n\times(-R,R)^n\). Note also that
\begin{align}\label{eq:real-axis-bound}
|b(\omega,t,x)|\le\beta(\omega,t)
\quad\text{for almost every }(\omega,t,x).
\end{align}

Throughout the paper, constants in estimates depend only on $n$, $T$, $R$,
and $C_1$, unless otherwise stated.

Our main results are the following null and approximate controllability
theorems.

\begin{theorem}
\label{thm:null-controllability}
Assume that \Cref{ass:analytic-coefficient} holds. Then \eqref{eq:forward} is null controllable at time \(T\). Moreover, the control can be chosen to satisfy
\begin{align}\label{eq:control-cost}
\lVert u\rVert_{L^\infty_{\mathbb F}(0,T;L^2(\Omega;L^2(G)))}
\leq C_{\mathrm{obs}}\lVert y_0\rVert_{L^2(\mathbb T^n)}.
\end{align}
Here \(C_{\mathrm{obs}}>0\) is a constant that may depend in addition on \(|G|\).
\end{theorem}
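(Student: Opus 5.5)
The plan is the standard duality route, with the observability inequality proved directly for the backward adjoint equation. For $z_T\in L^2_{\mathcal F_T}(\Omega;L^2(\mathbb T^n))$, let $(z,Z)$ solve
\begin{align*}
dz=(-\Delta z-bZ)\,dt+Z\,dW(t)\quad\text{in }(0,T)\times\mathbb T^n,\qquad z(T)=z_T .
\end{align*}
The target is the state-only estimate
\begin{align*}
\|z(0)\|_{L^2(\mathbb T^n)}\le C_{\mathrm{obs}}\int_0^T\bigl(\mathbb E\|z(t)\|_{L^2(G)}^2\bigr)^{1/2}dt .
\end{align*}
Note that $Z$ does not appear on the right-hand side. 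Once this holds, null controllability with the cost bound \eqref{eq:control-cost} follows by Hahn--Banach. The space $L^\infty_{\mathbb F}(0,T;L^2(\Omega;L^2(G)))$ is the dual of $L^1_{\mathbb F}(0,T;L^2(\Omega;L^2(G)))$. The functional $\mathbf 1_G z\mapsto -\langle y_0,z(0)\rangle$ is bounded on the subspace of observations, with norm at most $C_{\mathrm{obs}}\|y_0\|$. Extend it to the whole $L^1$ space and represent it by some $u$. Itô's formula for $\langle y,z\rangle$ then gives $\mathbb E\langle y(T),z_T\rangle=0$ for all $z_T$, that is, $y(T)=0$. One detail needs care: the representing element must be progressively measurable. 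This is handled by a conditional-expectation projection, or by working within the adapted subspace from the start.

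\textbf{Step 1: analytic energy estimate for the adjoint.} Complex-translate in space. For $|\eta|<R'<R$, consider $z^\eta(t,x)=z(t,x+i\eta)$, defined through Fourier series, $z^\eta=\sum_k \hat z_k e^{-k\cdot\eta}e^{ik\cdot x}$. Equivalently, use the weighted norm $\|e^{\sigma(t)|D|}z(t)\|$ with a time-dependent radius $\sigma(t)$ that grows as $t$ decreases from $T$. Apply Itô's formula backward in time to this quantity. Dissipation from $-\Delta$ gives a term $\|\nabla e^{\sigma|D|}z\|^2$, which absorbs the term $\sigma'\langle |D|e^{\sigma|D|}z,e^{\sigma|D|}z\rangle$ produced by the moving weight. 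Multiplication by $b$ is bounded on the analytic class with weight $\sigma\le R'$ with constant $\lesssim\beta(t)$, which is exactly where the uniform strip $S^n_R$ enters. The cross term $-\langle e^{\sigma|D|}bZ,e^{\sigma|D|}z\rangle$ and the $Z$ quadratic term combine after completing the square, since in the backward equation $\|e^{\sigma|D|}Z\|^2$ appears with a favorable sign. The remaining error is $\beta(t)^2\|e^{\sigma|D|}z\|^2$, and Gronwall controls it because $\beta\in L^2(0,T)$ uniformly in $\omega$, via \eqref{eq:coefficient-budget}. The outcome is, for $0\le s<t\le T$, a bound of $\mathbb E\|e^{c(t-s)|D|}z(s)\|^2$ by $C\,\mathbb E\|z(t)\|^2$. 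In other words, $z(s)$ is analytic with radius of order $\min(t-s,R')$ and is quantitatively bounded. The same computation with zero weight gives $\mathbb E\|z(s)\|^2\le C\mathbb E\|z(t)\|^2$.

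\textbf{Step 2: propagation of smallness and interpolation.} For each fixed $\omega$ and time, $z(s)$ is analytic in a strip, with bounds of the type given by Step~1. Apply the Vessella/Apraiz--Escauriaza propagation-of-smallness estimate from a measurable set $G$ of positive measure. This gives
\begin{align*}
\|z(s)\|_{L^2(\mathbb T^n)}\le C\|z(s)\|_{L^2(G)}^{\theta}M^{1-\theta},
\end{align*}
with $M$ the analytic bound. Taking expectations and using Hölder yields the one-time interpolation
\begin{align*}
\mathbb E\|z(s)\|^2\le e^{C/(t-s)}\bigl(\mathbb E\|z(s)\|_{L^2(G)}^2\bigr)^{\theta}\bigl(\mathbb E\|z(t)\|^2\bigr)^{1-\theta}.
\end{align*}
The dependence on $t-s$ comes from the radius of analyticity, and is polynomial or exponential in $1/(t-s)$.

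\textbf{Step 3: telescoping.} Combine the interpolation with the monotonicity from Step~1 on the sets $\{s\in(\ell_{m+1},\ell_m)\}$ of a geometric time sequence $\ell_m\downarrow 0$, reversed toward $T$ as appropriate for the backward equation. This is the Phung--Wang/Apraiz--Escauriaza telescoping argument, which upgrades the interpolation to an $L^1$-in-time observability inequality over a measurable subset of times of positive measure. Here the norm $\int(\mathbb E\|z\|_{L^2(G)}^2)^{1/2}dt$ arises naturally because the interpolation is applied to the deterministic function $t\mapsto\mathbb E\|z(t)\|^2$.

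I expect the main obstacle to be Step~1: obtaining the weighted analytic estimate without any observation of $Z$. The proof must show that the frequency-mixing term $bZ$ is absorbed by the Itô correction $\|e^{\sigma|D|}Z\|^2$ at the same analytic weight. This needs a multiplier bound $\|e^{\sigma|D|}(bf)\|\le C\beta\|e^{\sigma|D|}f\|$ valid uniformly for $\sigma<R$, which I would prove via complex translation on the torus, where periodicity makes translation commute with the Fourier weights.
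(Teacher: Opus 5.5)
Your duality step is the paper's proof of this theorem: Hahn--Banach on the observation subspace of $L^1_{\mathbb F}(0,T;L^2(\Omega;L^2(G)))$, representation of the extended functional by an element of $L^\infty_{\mathbb F}(0,T;L^2(\Omega;L^2(G)))$, and the duality identity. Your outline of the observability inequality also follows the paper's route to \Cref{thm:state-observability}.

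The step that fails as written is the link between your Steps~1 and~2. You take the analyticity radius of $z(s)$ to be of order $t-s$ at bounded cost. You then claim an interpolation with a \emph{fixed} $\theta$ and a factor $e^{C/(t-s)}$ that ``comes from the radius''. But the exponent in the propagation lemma depends on the strip width, through $\varrho$ in \Cref{thm:scalar-propagation}, and it necessarily degenerates as the width $r$ shrinks. For $r\le1$, the function $f(x)=\exp\bigl(r^{-2}(\cos x_1-1)\bigr)$ is bounded on $S_r^n$ by an absolute constant, but it is at most $e^{-1/r^2}$ on $\{\cos x_1\le0\}$. Applying the inequality to $f^m$ and letting $m\to\infty$ forces $\theta\lesssim r^2$ whenever $G\subset\{\cos x_1\le0\}$.

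With $r\sim t-s$, the exponent would therefore tend to $0$ along the geometric steps $\delta_k$. The telescoping, however, needs a single $\theta$ to define $\rho_0=1-\theta/2$, $\alpha=(1-\theta)/\theta$ and the weights $w_k$, so the argument collapses. The repair lies inside your own computation and is what the paper does: fix the target radius, letting the weight decrease linearly from $\rho=3R/4$ at the earlier time to $0$ at the later one. Completing the square against the dissipation then leaves $-\frac{n\rho^2}{2(t-s)^2}\|X\|^2$, whose time integral gives the factor $\exp\bigl(n\rho^2/(2(t-s))\bigr)$. This, not the propagation constant, is the source of $e^{C/(t-s)}$. Propagation is then applied on the fixed strip $S^n_{R/2}$ with $\theta=\theta(n,R,|G|)$.

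Elsewhere your variations are sound. In Step~2 you apply the scalar lemma for each fixed $\omega$, with the random bound $M(\omega)\lesssim\|e^{\rho|D|_1}z(s,\omega)\|_{L^2(\mathbb T^n)}$, and then use H\"older's inequality with exponents $1/\theta$ and $1/(1-\theta)$. This gives the same one-time interpolation as \Cref{prop:hilbert-propagation}, which instead treats $z(t)$ as an $L^2(\Omega)$-valued holomorphic function; your version is, if anything, more elementary. A combined weight such as $e^{\sigma|D|_1}$ also works. Summing the exact conjugations $e^{\sigma\varepsilon\cdot D}(bf)=\widetilde b(\cdot-\mathrm{i}\sigma\varepsilon)\,e^{\sigma\varepsilon\cdot D}f$ over the $2^n$ sign vectors gives your multiplier bound with constant $2^{n/2}\beta$. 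The paper instead handles each sign vector separately, where the constant is exactly $\beta$.

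Two points must be made explicit.
\begin{enumerate}
\item The Gronwall step has to be pathwise, with the random integrating factor $\exp\bigl(c\int\beta^2\bigr)\le e^{cC_1^2}$ kept inside the expectation. The reason is that \eqref{eq:coefficient-budget} bounds $\int_0^T\beta^2\,dt$ uniformly in $\omega$, not $\int_0^T\operatorname*{ess\,sup}_\omega\beta^2\,dt$, so a Gronwall inequality for $\mathbb E\|X\|^2$ does not close.
\item It\^o's formula for the weighted norm cannot be applied to $(z,Z)$ directly, since neither $e^{\sigma|D|_1}z$ nor $e^{\sigma|D|_1}Z$ is known to be square integrable. The paper performs the computation on the Galerkin system \eqref{eq:galerkin-bsde}, where the translations commute with $P_N$. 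It then passes to the limit via \Cref{lem:galerkin-identification} and monotone convergence over finite blocks of Fourier modes.
\end{enumerate}
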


\begin{theorem} 
\label{thm:approximate-controllability}
Under the assumptions of \Cref{thm:null-controllability}, equation
\eqref{eq:forward} is approximately controllable at time $T$.
\end{theorem}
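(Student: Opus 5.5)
The statement to prove is the approximate controllability theorem. By linearity I will reduce it to a unique-continuation property of the backward adjoint system, and derive that property from the same one-time interpolation estimate that underlies \Cref{thm:null-controllability}.

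\emph{Reduction.} By linearity and \Cref{thm:null-controllability}, it suffices to treat $y_0=0$. Indeed, first steer $y_0$ to zero with a control $u_1$; then, for the zero initial datum, reach an $\varepsilon$-neighbourhood of $y_T$ with a control $u_2$; and take $u=u_1+u_2$. It therefore suffices to show that the reachable set
\[
\mathcal R_T=\{y(T;0,u):u\in L^\infty_{\mathbb F}(0,T;L^2(\Omega;L^2(G)))\}
\]
is dense in $L^2_{\mathcal F_T}(\Omega;L^2(\mathbb T^n))$.

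\emph{Duality.} For $z_T\in L^2_{\mathcal F_T}(\Omega;L^2(\mathbb T^n))$, let $(z,Z)$ solve the backward adjoint equation
\[
dz=(-\Delta z-bZ)\,dt+Z\,dW(t),\qquad z(T)=z_T.
\]
Itô's formula gives the identity
\[
\mathbb E\langle y(T),z_T\rangle=\mathbb E\int_0^T\langle u,\mathbf 1_G z\rangle\,dt.
\]
Suppose $z_T$ is orthogonal to $\mathcal R_T$. Since the control space $L^\infty_{\mathbb F}(0,T;L^2(\Omega;L^2(G)))$ is dual-rich enough (it contains all indicators of predictable sets times $L^2(G)$ functions), we get $\mathbf 1_G z=0$ for a.e.\ $(\omega,t,x)$. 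We must then conclude that $z_T=0$.

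\emph{Unique continuation via the interpolation estimate.} The key tool is the one-time interpolation estimate from the proof of the observability inequality. I take it to have the form: for $0\le s<t\le T$,
\[
\mathbb E\|z(s)\|^2\le \bigl(C\,\mathbb E\|z(t)\|^2\bigr)^{1-\theta}\Bigl(C\,\mathbb E\int_{s'}^{t}\|z\|_{L^2(G)}^2\,d\tau\Bigr)^{\theta}+\text{(small high-frequency term)},
\]
or the propagation-of-smallness version produced by the analytic energy estimate. With $z\equiv0$ on $G$, this yields $z(s)=0$ for every $s<T$. It then remains to show $z_T=0$. The backward equation gives
\[
z(s)=\mathbb E[z_T\mid\mathcal F_s]-\int_s^T\bigl(\ldots\bigr)-\int_s^T Z\,dW.
\]
Here I expect the main obstacle: vanishing of $z$ on $[0,T)$ does not immediately force $z_T=0$, since $z$ need not be continuous at $T$ in a way that transfers the information.

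My first attempt is the following. Because $z\in L^2_{\mathbb F}(\Omega;C([0,T];L^2))$ for the adjoint solution (standard well-posedness with $z_T\in L^2$), continuity at $t=T$ gives $z_T=\lim_{s\to T}z(s)=0$. The subtlety is ensuring that the interpolation estimate really applies on every interval $[s,t]$ with $t<T$, so that $z(s)=0$ for all $s<T$. In particular, one must account for the fact that $\beta$ is only $L^2$ in time, which requires choosing subintervals where $\int\beta^2$ is small; this is handled uniformly by \eqref{eq:coefficient-budget}. If a direct estimate of this form is unavailable, I would instead apply the analyticity/propagation step to $(z,Z)$ on $[s,T]$: $z=0$ on $G\times(s,T)$ implies that $z$ has vanishing low-frequency projections there, and iterating the telescoping argument gives $\mathbb E\|z(s)\|^2=0$. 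Continuity in time then closes the argument, so $\mathcal R_T$ is dense and approximate controllability follows.
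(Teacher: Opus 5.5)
Your skeleton matches the paper's proof. You reduce to $y_0=0$; your detour through null controllability is valid but unnecessary, since a translate of a dense set is dense. You use the duality identity to get $\mathbf 1_G z=0$ when $\eta$ is orthogonal to the reachable set; the paper just tests with $u=\mathbf 1_G z^\eta$, which is an admissible control because $\sup_t\mathbb E\|z^\eta(t)\|^2<\infty$. You then deduce $z\equiv 0$ on $[0,T)$ and conclude $\eta=0$ from the time continuity of $z$.

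The gap is the only nontrivial step, unique continuation, which you never actually establish. The estimate you "take" it to be carries an additive "(small high-frequency term)". With such a remainder, $z=0$ on $G$ only gives $\mathbb E\|z(s)\|^2\le$ remainder, not $z(s)=0$. Your fallback does not work either. Vanishing on $G$ does not make low-frequency projections of $z$ vanish. Moreover, any argument through projected equations breaks down here: multiplication by a space-dependent $b$ mixes Fourier modes, so unobserved high modes of $Z$ enter the low-mode equation. This is exactly the obstruction the paper's introduction describes. Also, no subdivision into intervals with small $\int\beta^2$ is needed; the paper absorbs $\beta$ through the bounded weight $\Gamma\le e^{C_1^2}$.

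What closes the argument is the purely multiplicative one-time interpolation \eqref{eq:one-time-multiplicative}. It holds for every $0\le t<s\le T$, in particular for $s=T$:
\[
\|z(t)\|_{L^2_{\mathcal F_t}(\Omega;L^2(\mathbb T^n))}\le Ce^{C/(T-t)}\|z(t)\|^{\theta}_{L^2_{\mathcal F_t}(\Omega;L^2(G))}\|\eta\|^{1-\theta}_{L^2_{\mathcal F_T}(\Omega;L^2(\mathbb T^n))}.
\]
It uses the observation at the single time $t$ and has no remainder. Hence $\|z(t)\|_{L^2_{\mathcal F_t}(\Omega;L^2(G))}=0$ for a.e. $t<T$ gives $z(t)=0$ for a.e. $t<T$ directly, and continuity at $T$ gives $\eta=0$.

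Your "iterate the telescoping" idea can be made rigorous in a different way. Rerun the proof of \Cref{thm:state-observability} on $[s,T]$; Step 3 uses triviality of $\mathcal F_0$ only to identify the final norm. This yields
\[
\|z(s)\|_{L^2_{\mathcal F_s}(\Omega;L^2(\mathbb T^n))}\le C_s\int_s^T\|z(\tau)\|_{L^2_{\mathcal F_\tau}(\Omega;L^2(G))}\,d\tau .
\]
That bound must come from the analytic smoothing plus propagation of smallness, not from low-frequency projections.
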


By the classical duality argument, the null controllability in \Cref{thm:null-controllability} is equivalent to an observability estimate for the following adjoint equation:
\begin{equation}\label{eq:backward}
\begin{cases}
d z=-(\Delta z+bZ)\,d t+Z\,d W(t)
&\text{in }(0,T)\times\mathbb{T}^n, \\[2pt]
z(T)=\eta
&\text{in }\mathbb{T}^n. 
\end{cases}
\end{equation}
Under \Cref{ass:analytic-coefficient}, a standard adaptation of the well-posedness argument in \cite[Section~4.4]{Lue2021a} yields, for every $\eta\in L^2_{\mathcal{F}_T}(\Omega;L^2(\mathbb{T}^n))$, a unique mild solution  
$(z,Z) \in \big[L^2_{\mathbb F}(\Omega;C([0,T];L^2(\mathbb{T}^n)))
\cap L^2_{\mathbb F}(0,T;H^1(\mathbb{T}^n))\big]\times L^2_{\mathbb F}(0,T;L^2(\mathbb{T}^n))$. 
Moreover, this solution satisfies the estimate
\begin{align}\label{eq:backward-solution-estimate}
\mathbb{E}\sup_{0\leq t\leq T}\|z(t)\|_{L^2(\mathbb{T}^n)}^2
+\mathbb{E}\int_0^T\bigl(\|\nabla z(t)\|_{L^2(\mathbb{T}^n)}^2+\|Z(t)\|_{L^2(\mathbb{T}^n)}^2\bigr)\,d t
\leq C\mathbb{E}\|\eta\|_{L^2(\mathbb{T}^n)}^2.
\end{align}
Our main observability result is the following.

\begin{theorem}
\label{thm:state-observability}
Assume that \Cref{ass:analytic-coefficient} holds. 
Then, every solution
$(z,Z)$ of \eqref{eq:backward} satisfies
\begin{align}\label{eq:state-observability}
\lVert z(0)\rVert_{L^2(\mathbb{T}^n)}
\leq C_{\mathrm{obs}}
\lVert z\rVert_{L^1_{\mathbb F}(0,T;L^2(\Omega;L^2(G)))},
\end{align}
with $C_{\mathrm{obs}}$ as in \eqref{eq:control-cost}.
\end{theorem}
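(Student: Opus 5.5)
The plan follows the abstract: a direct analytic energy estimate for the adjoint state, then propagation of smallness, then a telescoping argument in time.

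\textbf{Step 1: analytic energy estimate.} Fix a small $\rho\in(0,R)$ and a time-dependent width $\tau(t)=\rho\min\{t,1\}$, or a weight that grows backward from the terminal time. For the backward state $z$, consider the complex-translated energy
\[
E_\tau(t)=\sum_{k\in\mathbb Z^n} e^{2\tau(t)|k|_1}|\hat z(t,k)|^2 ,
\]
which is equivalent to $\sum_{\sigma\in\{\pm1\}^n}\|z(t,\cdot+i\tau\sigma)\|^2$. On the torus the complex translation $x\mapsto x+i\tau\sigma$ commutes with Fourier projections and produces no boundary terms. Apply It\^o's formula backward in time. The Laplacian produces the dissipation $-2\sum_k|k|^2e^{2\tau|k|_1}|\hat z|^2$. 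The time derivative of the weight costs at most $2|\tau'||k|_1$, which the dissipation absorbs up to a low-frequency remainder. The martingale part contributes $+\mathbb E\sum_k e^{2\tau|k|_1}|\hat Z(k)|^2$ with a good sign.

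The noise term is $\langle bZ,z\rangle$ in the weighted norm. On the translated line it becomes $\langle\widetilde b(\cdot+i\tau\sigma)Z(\cdot+i\tau\sigma),z(\cdot+i\tau\sigma)\rangle$. This is bounded by $\beta(t)\|Z_\tau\|\,\|z_\tau\|\le \tfrac12\|Z_\tau\|^2+\tfrac12\beta^2\|z_\tau\|^2$, since $\widetilde b$ is bounded by $\beta$ on the whole polystrip. The frequency-mixing term is thus absorbed by the $Z$-energy, and no spectral projection is ever needed.

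Because $\int_0^T\beta^2\,dt\le C_1^2$ uniformly in $\omega$, a stochastic Gronwall argument applies. One works with the factor $\exp\bigl(\int_s^t\beta^2\bigr)$, which is adapted and bounded, and multiplies it inside the It\^o formula. This gives, for $0\le s<t\le T$,
\[
\mathbb E\, E_{\tau}(s)\le C\,\mathbb E\|z(t)\|^2 ,
\]
that is, $z(s)$ is analytic with radius about $\rho(t-s)$ in mean square. The same estimate also gives the backward $L^2$ contraction $\mathbb E\|z(s)\|^2\le C\,\mathbb E\|z(t)\|^2$.

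\textbf{Step 2: propagation of smallness.} Apply the Vessella/Apraiz--Escauriaza--Wang--Zhang interpolation inequality for real-analytic functions with controlled analytic norm, from a measurable set $G$ of positive measure. Pathwise this gives
\[
\|f\|_{L^2(\mathbb T^n)}\le C\|f\|_{L^2(G)}^{\theta}\,M^{1-\theta}, \qquad M=\sup_{\text{strip of width } \rho h}|F| .
\]
One must track the dependence $\theta=\theta(h)$ and $C(h)\sim e^{C/h}$ as $h\to0$, as in spectral-type estimates. Applied at time $s$ with $h=t-s$ and combined with Step 1, then taking expectations with H\"older in $\omega$, this yields
\[
\bigl(\mathbb E\|z(s)\|^2\bigr)^{1/2}\le e^{C/h}\bigl(\mathbb E\|z(s)\|_{L^2(G)}^2\bigr)^{\theta/2}\bigl(\mathbb E\|z(t)\|^2\bigr)^{(1-\theta)/2}.
\]

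\textbf{Step 3: telescoping.} Rewrite the interpolation with Young's inequality as
\[
\|z(s)\|_{L^2(\Omega;L^2)}\le \varepsilon\|z(t)\|+\varepsilon^{-\gamma}e^{C/h}\|z(s)\|_{L^2(\Omega;L^2(G))}.
\]
Next, use the backward $L^2$ contraction to replace $z(s)$ on the left by $z(0)$ and to average $s$ over an interval $[t_{m+1},t_m]$. This produces $\int\|z\|_{L^2(\Omega;L^2(G))}\,ds$, which is exactly the $L^1_{\mathbb F}(0,T;L^2(\Omega;L^2(G)))$ norm. Choose a geometric sequence $t_m\downarrow0$, $t_m-t_{m+1}=h_m$, and $\varepsilon_m=e^{-C'/h_m}$. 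Iterate in the standard Phung--Wang telescoping, summing the weighted differences $e^{-c/h_{m}}\|z(t_{m+1})\|-e^{-c/h_{m+1}}\|z(t_m)\|$ (indexing as in the usual scheme), and the series closes to \eqref{eq:state-observability}.

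The \textbf{main obstacle} is Step 1: making It\^o's formula rigorous for the complex-translated energy, which requires regularization, for instance Galerkin truncation to $|k|\le N$. The difficulty is that the multiplication by $b$ does not commute with the truncation. I would handle this by proving the estimate first for the strip-translated functions directly, approximating $\eta$ and $b$, and only then passing to the limit. The second concern is the uniform-in-$\omega$ control of the Gronwall factor with unbounded-in-time $\beta$, which the $L^\infty_\omega L^2_t$ bound \eqref{eq:coefficient-budget} is designed to supply.
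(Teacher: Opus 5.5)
\textbf{There is a genuine gap, at the point where Step 1 feeds Steps 2--3.} Your Step 1 mechanism matches the paper's. On the translated line $|\widetilde b|\le\beta$, so $2\operatorname{Re}\langle bZ,z\rangle$ is absorbed by the It\^o correction, and the factor $\exp\int\beta^2\le e^{C_1^2}$ handles the remainder. The problem is the width you reach. With a fixed slope $\rho$, the completed square costs only $e^{n\rho^2h/2}$ over a step of length $h$, but the strip has width $\rho h$. You then run propagation of smallness on this shrinking strip and expect $\theta=\theta(h)$ with only the prefactor degenerating like $e^{C/h}$. In fact the exponent itself must collapse. Take $f(x)=e^{-K\cos x_1}$ and $G\subset\{|x_1|\le a\}$ with $a<\pi/2$. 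Then
\[
\sup_{S^n_\epsilon}|f|=e^{K\cosh\epsilon},\qquad
\|f\|_{L^2(\mathbb T^n)}\ge cK^{-1/4}e^{K},\qquad
\|f\|_{L^2(G)}\le|G|^{1/2}e^{-K\cos a}.
\]
Letting $K\to\infty$ in $\|f\|_{L^2(\mathbb T^n)}\le C(\epsilon)\|f\|_{L^2(G)}^{\theta}M^{1-\theta}$ forces $\theta\le(\cosh\epsilon-1)/(\cosh\epsilon+\cos a)=O(\epsilon^2)$, however large $C(\epsilon)$ is. So $\theta(h)=O(h^2)$ and $\gamma=(1-\theta)/\theta\ge ch^{-2}$. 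No telescoping then closes. Write $w_m=e^{-a_m}$. The requirements $w_m\varepsilon_m=w_{m+1}$ and $w_m\varepsilon_m^{-\gamma_m}e^{C/h_m}\le C'$ give $a_{m+1}+c\le(1+2\theta_m)(a_m+c)$. Since $\sum_m\theta(h_m)\le C\sum_m h_m^2<\infty$, the $a_m$ stay bounded, and the tail $w_m\|z(t_m)\|$ never vanishes.

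\textbf{The fix is the opposite trade-off, which your computation already contains.} On a step of length $h$, let the translation run from $0$ at the later endpoint to a \emph{fixed} $\rho<R$ at the earlier one, so the slope is $\rho/h$. The completed square then costs $n\rho^2/(2h^2)$ per unit time, which integrates to $n\rho^2/(2h)$. With $\rho=3R/4$ this is Theorem~\ref{thm:analytic-smoothing}: a strip of fixed width $R/2$ at cost $e^{C_1^2/2}\exp(9nR^2/(64h))$. Now $\theta$ depends only on $n,R,|G|$, and all $h$-dependence sits in $e^{C/h}$, which the ratio-$(1-\theta/2)$ telescoping absorbs.

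Two smaller corrections. First, the time grid must accumulate at $T$, where the uncontrolled datum $\eta$ sits, not at $0$. The paper takes $t_k=T(1-\rho_0^k)$, replaces $z(t_k)$ by $z(t)$ via Lemma~\ref{lem:energy-propagation}, and averages over $I_k=[t_k,t_k+\delta_k/2]$. It then kills the tail by $w_{m+1}\|z(t_{m+1})\|\le Ce^{-C_2/\delta_{m+1}}\|\eta\|$. With $t_m\downarrow0$, the costs $e^{C/h_m}$ pile up next to the target $z(0)$. Second, the Galerkin obstacle needs no approximation of $b$ or $\eta$. Translate one sign vector at a time, since the weight $e^{2\tau|k|_1}$ does not turn $\langle bZ,z\rangle$ into a translated pairing. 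Then use \eqref{eq:projected-shift-identity}, and note that $P_N$ drops out when paired with $X_N^\varepsilon\in H_N$.

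Your pathwise propagation with H\"older in $\omega$ is an acceptable substitute for Proposition~\ref{prop:hilbert-propagation}, once the strip width is fixed.
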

The proof of \Cref{thm:state-observability} rests on a direct analytic energy estimate for the adjoint state. On each interval \([t,s]\), we apply a time-dependent complex translation to the Fourier--Galerkin equations. Since this translation vanishes at the terminal time \(s\), no analytic regularity is required of the terminal datum. The positive It\^o correction involving the translated martingale integrand absorbs the translated coupling term \(bZ\), while the Laplacian controls the term generated by the time derivative of the translation. The analytic bound on \(b\), together with \eqref{eq:coefficient-budget}, makes the resulting estimates uniform in the Galerkin dimension. Passing to the limit yields quantitative spatial analyticity of \(z(t)\) without imposing spatial regularity on \(Z\).

We then apply propagation of smallness to the \(L^2(\Omega)\)-valued analytic state and obtain a one-time interpolation inequality from every measurable set \(G\) of positive measure. Combining this inequality with one-sided energy propagation and geometric telescoping yields the \(L^1\)-in-time estimate in \Cref{thm:state-observability}. Its \(L^1\)--\(L^\infty\) duality yields \Cref{thm:null-controllability}, and the same interpolation inequality supplies the unique continuation needed for \Cref{thm:approximate-controllability}.

The remainder of the paper is organized as follows. Section~\ref{sec:analytic-smoothing} proves the analytic energy estimate. Section~\ref{sec:propagation} derives the one-time interpolation and observability. Section~\ref{sec:duality} proves null and approximate controllability by duality. Section~\ref{sec:richness} illustrates the richness of the analytic coefficient class. Appendix~\ref{app:galerkin} contains the proof of \Cref{lem:galerkin-identification}.

\section{Analytic energy estimates for the adjoint state}\label{sec:analytic-smoothing}

Throughout Sections~\ref{sec:analytic-smoothing}--\ref{sec:duality}, we assume that the coefficient \(b\) satisfies \Cref{ass:analytic-coefficient} with fixed \(R\) and \(C_1\), and we let \((z,Z)\) denote a solution of \eqref{eq:backward}. We work in the complexifications of the periodic Hilbert spaces.

In this section, we prove the following analytic smoothing estimate for \(z\), without assuming spatial regularity of \(Z\). This estimate will be used in Section~\ref{sec:propagation} to derive a one-time interpolation inequality and an observability estimate.

\begin{theorem}[Quantitative analytic smoothing]
\label{thm:analytic-smoothing}
For every $0\leq t<T$, the state $z(t)$ has an
$L^2_{\mathcal{F}_t}(\Omega;\mathbb{C})$-valued holomorphic extension
$\mathscr Z_t$ to $S_{3R/4}^n$, periodic in each coordinate.
There exists $C>0$, depending only on $n$ and $R$, such that, for every
$0\leq t<s\leq T$,
\begin{align}\label{eq:fixed-strip-smoothing}
\sup_{\zeta\in\overline{S_{R/2}^n}}
\lVert \mathscr Z_t(\zeta)\rVert_{L^2(\Omega)}
\leq C e^{C_1^2/2}
\exp\biggl(\frac{9nR^2}{64(s-t)}\biggr)
\lVert z(s)\rVert_{L^2_{\mathcal{F}_s}(\Omega;L^2(\mathbb{T}^n))}.
\end{align}
\end{theorem}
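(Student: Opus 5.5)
We will work with Fourier--Galerkin truncations and a time-dependent complex translation, following the strategy announced in the introduction. Fix $0\le t<s\le T$ and a direction $\theta\in\mathbb R^n$ with $|\theta_j|\le 3R/4$. We let $P_N$ denote the projection onto Fourier modes $|k|_\infty\le N$. We consider the Galerkin system $dz_N=-(\Delta z_N+P_N(bZ_N))\,dt+Z_N\,dW$ on $[t,s]$ with $z_N(s)=P_Nz(s)$. The identification $z_N\to z$, $Z_N\to Z$ is the content of \Cref{lem:galerkin-identification}, which we take as given. Set $\lambda(\tau)=(s-\tau)/(s-t)\in[0,1]$, so that $\lambda(s)=0$ and $\lambda(t)=1$. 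We then define the translated Galerkin state $w_N(\tau)=\sum_k e^{-\lambda(\tau)k\cdot\theta}\hat z_N(\tau,k)e^{ik\cdot x}$. Formally this is $z_N(\tau,x+i\lambda\theta)$, and it is a finite sum, hence holomorphic. Because $\lambda(s)=0$, we have $w_N(s)=P_Nz(s)$, so no analyticity of the terminal datum is needed.

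Next we apply It\^o's formula to $\|w_N(\tau)\|^2$ on $[t,s]$. The equation for $\hat w_N(k)$ contains the Laplacian term $|k|^2\hat w_N$ and a term $\lambda'(\tau)(k\cdot\theta)\hat w_N$ coming from the translation. It also contains the translated coupling $\widetilde{P_N(bZ_N)}$, which is essentially $P_N$ applied to $\widetilde b(\cdot+i\lambda\theta)\,W_N$, where $W_N$ is the analogous translate of $Z_N$. The noise term is $W_N\,dW$. Using backward It\^o's formula and taking expectations, we obtain
\[
\mathbb E\|w_N(t)\|^2+\mathbb E\int_t^s\|W_N\|^2\,d\tau
=\mathbb E\|P_Nz(s)\|^2+2\,\mathbb E\int_t^s\Bigl(-\|\nabla w_N\|^2\ \text{(sign-favourable part)}+\tfrac{1}{s-t}\sum_k(k\cdot\theta)|\hat w_N|^2+\operatorname{Re}\langle \widetilde b\,W_N,w_N\rangle\Bigr)d\tau .
\]
Signs still have to be tracked; the structure is that the Laplacian dissipates $|k|^2|\hat w|^2$ backward in time. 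We bound the drift-from-translation term pointwise in $k$ by $|k||\theta|/(s-t)\le |k|^2+|\theta|^2/(4(s-t)^2)$. Absorbing this into the dissipation is too lossy, so instead we use an exponential weight. Multiplying by $e^{-\mu(\tau)}$, the per-mode quadratic $-|k|^2+|k\cdot\theta|/(s-t)$ is bounded by $|\theta|^2/(4(s-t)^2)$, and integrating over $[t,s]$ gives $|\theta|^2/(4(s-t))$. With $|\theta|^2\le 9nR^2/16$, this produces $\exp(9nR^2/(64(s-t)))$ after taking the square root of the energy, which matches \eqref{eq:fixed-strip-smoothing}.

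The coupling term we control by $2|\langle\widetilde b W_N,w_N\rangle|\le\|W_N\|^2+\beta^2\|w_N\|^2$, using $\sup|\widetilde b|\le\beta$ on $S_R^n$ together with the commutation of $P_N$ with complex translation on the torus. This absorbs the positive It\^o correction $\|W_N\|^2$ and leaves $\beta^2\|w_N\|^2$. Here lies the main obstacle: $\beta$ is random and only $L^2$ in time. We therefore use the pathwise integrating factor $\exp(-\int_\tau^s\beta^2)$ inside the It\^o computation rather than Gronwall after taking expectations. This factor is adapted in the backward sense only after conditioning, so we would apply It\^o to $e^{\int_t^\tau\beta^2}\|w_N(\tau)\|^2$, which is adapted. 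The uniform bound \eqref{eq:coefficient-budget} then gives the factor $e^{C_1^2}$ on energies, hence $e^{C_1^2/2}$ after the square root. A further subtlety is that $P_N(bZ_N)$ translated is not exactly $P_N(\widetilde b W_N)$ when $P_N$ truncates; since $P_N$ is an orthogonal projection in the translated $L^2$, we can test against $w_N$, which lies in the range of $P_N$, and the projection drops.

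Finally, we pass to the limit and take the supremum. The estimate is uniform in $N$ and in $\theta$, so it gives $\sum_k e^{2|k\cdot\theta|}\mathbb E|\hat z(t,k)|^2\le C e^{C_1^2}e^{9nR^2/(32(s-t))}\mathbb E\|z(s)\|^2$ after summing over the $2^n$ sign choices of $\theta$ with $|\theta_j|=3R/4$. This weighted $\ell^2$ bound shows that $\sum_k\hat z(t,k)e^{ik\cdot\zeta}$ converges in $L^2(\Omega)$, locally uniformly on $S_{3R/4}^n$, which defines the periodic holomorphic extension $\mathscr Z_t$. On $\overline{S_{R/2}^n}$ we apply Cauchy--Schwarz with the weight $\sum_k e^{-2(3R/4-R/2)|k|_1}$, which is finite and depends only on $n$ and $R$; this yields \eqref{eq:fixed-strip-smoothing}.
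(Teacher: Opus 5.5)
Your proposal is essentially the paper's proof. It uses the same steps: Galerkin truncation; a complex translation vanishing at $r=s$ (the paper's $e^{\sigma(r)\varepsilon\cdot D}$ with $\sigma(r)=\rho(s-r)/(s-t)$); completing the square in $k$ for the translation drift; absorbing the coupling $bZ$ into the positive It\^o correction; the pathwise weight $\Gamma(r)=\exp\bigl(\int_t^r[n\rho^2/(2(s-t)^2)+\beta^2]\,d\tau\bigr)$; summation over the $2^n$ sign vectors; and Cauchy--Schwarz with $\rho=3R/4$, $r=R/2$.

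One small correction to your ``further subtlety'': the translated coupling is \emph{exactly} $P_N\bigl(\widetilde b(\cdot+\mathrm{i}\lambda\theta)W_N\bigr)$, by a contour shift in each coordinate (the paper's identity \eqref{eq:projected-shift-identity}). Orthogonality of $P_N$ is used only to drop $P_N$ from the pairing with $w_N$; it could not by itself repair a genuine discrepancy.
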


\subsection{Galerkin approximation and analytic energy estimates}

To prove \Cref{thm:analytic-smoothing}, we estimate the Galerkin solutions
by a complex translation and then pass to the limit.

Let $e_{\mathbf k}(x)=(2\pi)^{-n/2}e^{\mathrm{i}{\mathbf k}\cdot x}$, $\mathbf k=(k_1,\cdots,k_n)\in\mathbb Z^n$,
be the orthonormal Fourier basis of $L^2(\mathbb{T}^n)$.  Set
$D_j=-\mathrm{i}\partial_{x_j}$ and
$D=(D_1,\ldots,D_n)$. 
Let $P_N$ be the orthogonal projection onto
$H_N:=\operatorname{span}\{e_{\mathbf k}:|\mathbf k|_\infty\leq N\}$ with $|\mathbf k|_\infty = \max\limits_{1\leq j\leq n}|k_j|$.
Fix $0\leq t<s\leq T$ and set $\xi=z(s)$.
On $[0,s]$, approximate $(z,Z)$ by the $H_N$-valued solution $(z_N,Z_N)$ of
\begin{equation}\label{eq:galerkin-bsde}
\begin{cases}
\begin{aligned}
&d z_N
=-\bigl(\Delta z_N+P_N(bZ_N)\bigr)\,dr
+Z_N\,dW(r)
&&\text{in }(0,s),\\
&z_N(s)=P_N\xi.
\end{aligned}
\end{cases}
\end{equation}
Since $P_N$ is an orthogonal projection, \eqref{eq:real-axis-bound} gives
\begin{align*}
\|P_N(bv)\|_{L^2(\mathbb{T}^n)}
\leq\|bv\|_{L^2(\mathbb{T}^n)}
\leq\beta\|v\|_{L^2(\mathbb{T}^n)},\qquad v\in H_N.
\end{align*}
In view of \eqref{eq:coefficient-budget}, a standard adaptation of the
well-posedness argument in \cite[Section~4.4]{Lue2021a} yields a unique
solution of \eqref{eq:galerkin-bsde}.  The following convergence result
is proved in Appendix~\ref{app:galerkin}.

\begin{lemma}[Galerkin convergence]\label{lem:galerkin-identification}
For every $r\in[0,s]$, it holds that
\begin{align}\label{eq:galerkin-strong-time}
\lim_{N\to\infty}\|z_N(r)- z(r)\|_{L^2_{\mathcal{F}_r}(\Omega;L^2(\mathbb{T}^n))}
=0 
\end{align}
and
\begin{align}\label{eq:galerkin-strong-time-1}
\begin{cases}
\displaystyle\lim_{N\to\infty}\|z_N- z\|_{L^2_{\mathbb F}(\Omega;C([0,s];L^2(\mathbb{T}^n)))
\cap L^2_{\mathbb F}(0,s;H^1(\mathbb{T}^n))}=0,\\[2pt]
\displaystyle\lim_{N\to\infty}\|Z_N- Z\|_{L^2_{\mathbb F}(0,s;L^2(\mathbb{T}^n))}=0.
\end{cases}
\end{align}
\end{lemma}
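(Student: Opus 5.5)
The plan is to show that the Galerkin scheme \eqref{eq:galerkin-bsde} is uniformly stable and that the differences \((z_N-z,Z_N-Z)\) satisfy a backward equation whose forcing terms tend to zero. Put \(\delta_N=z-z_N\) and \(\Delta_N=Z-Z_N\). Apply \(P_N\) to \eqref{eq:backward}. Since \(P_N\) commutes with \(\Delta\) and with the stochastic integral, \((P_Nz,P_NZ)\) solves
\[
d(P_Nz)=-\bigl(\Delta P_Nz+P_N(bZ)\bigr)\,dr+P_NZ\,dW(r),\qquad P_Nz(s)=P_N\xi .
\]
I will compare \((P_Nz,P_NZ)\) with \((z_N,Z_N)\) rather than comparing \(z\) directly. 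The pair \(e_N=P_Nz-z_N\), \(E_N=P_NZ-Z_N\) is \(H_N\)-valued with \(e_N(s)=0\), and it solves
\[
de_N=-\bigl(\Delta e_N+P_N(bE_N)+P_N(b(I-P_N)Z)\bigr)\,dr+E_N\,dW .
\]

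Apply It\^o's formula to \(\|e_N\|^2\) on \([r,s]\). The Laplacian term has a favourable sign. The coupling term is bounded by \(2\beta\|e_N\|\|E_N\|\le \tfrac12\|E_N\|^2+2\beta^2\|e_N\|^2\). The remainder is bounded by \(2\beta\|e_N\|\|(I-P_N)Z\|\le \beta^2\|e_N\|^2+\|(I-P_N)Z\|^2\). The difficulty is that \(\beta^2\) is only integrable in time, not bounded, so plain Gronwall under the expectation fails. I will instead apply It\^o's formula to \(e^{\int_0^r\lambda\beta^2\,d\tau}\|e_N(r)\|^2\) with \(\lambda=3\). This weight is bounded above and below by \(1\) and \(e^{3C_1^2}\) uniformly in \(\omega\), by \eqref{eq:coefficient-budget}. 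The \(\beta^2\|e_N\|^2\) terms are then absorbed by the derivative of the weight, and taking expectations gives
\[
\mathbb E\|e_N(r)\|^2+\mathbb E\int_r^s\bigl(\|\nabla e_N\|^2+\|E_N\|^2\bigr)\le C e^{3C_1^2}\,\mathbb E\int_0^s\|(I-P_N)Z\|^2\,d\tau .
\]
The right-hand side tends to \(0\) by dominated convergence, since \(Z\in L^2_{\mathbb F}(0,s;L^2)\). The stochastic integral term should be handled by localization. A BDG argument on the same weighted identity then upgrades this to the \(\mathbb E\sup_r\) norm: the martingale term \(\int\langle e_N,E_N\rangle dW\) is bounded by \(\tfrac12\mathbb E\sup\|e_N\|^2+C\,\mathbb E\int\|E_N\|^2\).

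Finally, write \(z-z_N=(I-P_N)z+e_N\) and \(Z-Z_N=(I-P_N)Z+E_N\). The tails \((I-P_N)z\) tend to zero in \(L^2_{\mathbb F}(0,s;H^1)\) and \((I-P_N)Z\) in \(L^2_{\mathbb F}(0,s;L^2)\), both by dominated convergence.

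The remaining piece is \(\mathbb E\sup_r\|(I-P_N)z(r)\|^2\). Pointwise in \(\omega\) it tends to zero by Dini's theorem: \(\|(I-P_N)z(r)\|^2\) decreases monotonically in \(N\) to \(0\) and is continuous in \(r\) on the compact interval. Dominated convergence with dominant \(\sup_r\|z(r)\|^2\) then finishes it.

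This yields \eqref{eq:galerkin-strong-time-1}, and \eqref{eq:galerkin-strong-time} follows from the sup estimate.

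One technical point concerns the mild-versus-It\^o framework. Since \(e_N\) is finite-dimensional, It\^o's formula applies to it directly. The equation for \(P_Nz\) is obtained by testing the mild/weak formulation of \eqref{eq:backward} against each \(e_{\mathbf k}\), which is legitimate because these functions are smooth eigenfunctions.

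The main obstacle is the merely \(L^2\)-in-time, \(\omega\)-uniform integrability of \(\beta\). The exponential time-weight built from \(\int\beta^2\), which is bounded uniformly in \(\omega\) by \(C_1^2\), is precisely what makes the estimate close.
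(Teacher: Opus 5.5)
Your proposal is correct and follows essentially the paper's route: you compare $z_N$ with $P_Nz$ and estimate the $H_N$-valued error, whose forcing is $P_N(b(I-P_N)Z)$. You do this by It\^o's formula with an exponential weight $\exp(\lambda\int_0^r\beta^2\,d\tau)$, bounded uniformly in $\omega$ by \eqref{eq:coefficient-budget}, and you let $\mathbb E\int_0^s\|(I-P_N)Z\|^2\,d\tau\to0$ by dominated convergence. The differences are minor: you absorb the forcing by Young's inequality into the weight ($\lambda=3$) and prove the $\mathbb E\sup$ bound yourself via BDG. The paper instead takes $\lambda=2$, cites a standard stability estimate for $\mathbb E\sup_r\|e_N(r)\|^2$, and uses Cauchy--Schwarz for the forcing pairing; you also make explicit the tail $(I-P_N)z$ (via Dini and dominated convergence), which the paper leaves implicit in its final ``consequently''.
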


For \(v\in L^2(\mathbb T^n)\), we define its Fourier coefficients by
\begin{align*}
\widehat v(\mathbf k)=\frac1{(2\pi)^n}\int_{\mathbb T^n}v(x)e^{-\mathrm{i}\mathbf k\cdot x}\,dx,
\qquad \mathbf k\in\mathbb Z^n.
\end{align*}

For \(a\in\mathbb R\) and
\(\varepsilon=(\varepsilon_1,\ldots,\varepsilon_n)\in\{+1,-1\}^n\), the
Fourier multiplier \(e^{a\varepsilon\cdot D}\) is defined on \(H_N\) by
\(e^{a\varepsilon\cdot D}e_{\mathbf k}=e^{a\varepsilon\cdot \mathbf k}e_{\mathbf k}\) and satisfies
\begin{align}\label{eq:complex-translation}
(e^{a\varepsilon\cdot D}v)(x)=v(x-\mathrm{i} a\varepsilon),
\qquad v\in H_N.
\end{align}
We work outside the fixed null set appearing in \Cref{ass:analytic-coefficient}.
Here \(\widetilde b\) denotes the analytic extension from that assumption.
For every \(v\in H_N\), \(|a|<R\), and
\(\varepsilon\in\{+1,-1\}^n\), we also have
\begin{align}\label{eq:projected-shift-identity}
e^{a\varepsilon\cdot D}P_N(bv)
=P_N\bigl[\widetilde b(\,\cdot-\mathrm{i} a\varepsilon\,)
e^{a\varepsilon\cdot D}v\bigr].
\end{align}
Indeed, Cauchy's theorem applied in each coordinate gives
\begin{align*}
\tikz[baseline=(product.base)]{
\node[inner sep=0pt,outer sep=0pt] (product)
{$\widetilde b(\,\cdot-\mathrm{i} a\varepsilon\,)
v(\,\cdot-\mathrm{i} a\varepsilon\,)$};
\draw[line width=0.35pt]
([yshift=1.5pt]product.north west) --
([yshift=4pt]product.north) --
([yshift=1.5pt]product.north east);
}(\mathbf k)
=e^{a\varepsilon\cdot \mathbf k}\widehat{bv}(\mathbf k).
\end{align*}
The contours lie in a smaller polystrip, and the integrals over the
vertical sides cancel by periodicity. By \eqref{eq:complex-translation},
retaining the Fourier modes with \(|\mathbf k|_\infty\le N\) yields
\eqref{eq:projected-shift-identity}.

To combine the estimates over all sign vectors, we use the Fourier multiplier
\(e^{\rho|D|_1}\), defined for \(\rho>0\) by
\begin{align*}
(e^{\rho|D|_1}v)(x)
=\sum_{\mathbf k\in\mathbb Z^n}e^{\rho|\mathbf k|_1}\widehat v(\mathbf k) e^{i\mathbf k\cdot x},
\end{align*}
where \(|\mathbf k|_1=\sum\limits_{j=1}^n|k_j|\).
Its domain consists of those \(v\in L^2(\mathbb T^n)\) for which this series
converges in \(L^2(\mathbb T^n)\), equivalently,
\begin{align*}
\sum_{\mathbf k\in\mathbb Z^n}e^{2\rho|\mathbf k|_1}|\widehat v(\mathbf k)|^2<\infty.
\end{align*}

\begin{proposition}[Finite-dimensional analytic energy estimate]
\label{prop:finite-analytic-energy}
Let $0<\rho<R$.  For every $N\in\mathbb{N}$ and
$\varepsilon\in\{+1,-1\}^n$,
\begin{align}\label{eq:one-sided-shift-estimate}
\mathbb{E}\bigl\lVert e^{\rho\varepsilon\cdot D}z_N(t)\bigr\rVert_{L^2(\mathbb{T}^n)}^2
\leq
\exp\biggl(\frac{n\rho^2}{2(s-t)}+C_1^2\biggr)
\mathbb{E}\lVert P_N z(s)\rVert_{L^2(\mathbb{T}^n)}^2.
\end{align}
Consequently,
\begin{align}\label{eq:finite-analytic-norm-estimate}
\mathbb{E}\bigl\lVert e^{\rho|D|_1}z_N(t)\bigr\rVert_{L^2(\mathbb{T}^n)}^2
\leq
2^n\exp\biggl(\frac{n\rho^2}{2(s-t)}+C_1^2\biggr)
\mathbb{E}\lVert z(s)\rVert_{L^2(\mathbb{T}^n)}^2.
\end{align}
\end{proposition}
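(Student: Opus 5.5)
We introduce the time-dependent translation width
\[
a(r)=\rho\,\frac{s-r}{s-t},\qquad r\in[t,s],
\]
so that $a(t)=\rho$, $a(s)=0$ and $a'(r)=-\rho/(s-t)$. We then work with the finite-dimensional process $w_N(r):=e^{a(r)\varepsilon\cdot D}z_N(r)$. Since $H_N$ is finite dimensional and $a$ is smooth, Itô's product rule applies mode by mode. Using \eqref{eq:galerkin-bsde} and \eqref{eq:projected-shift-identity}, we get
\[
dw_N=\Bigl(a'(r)(\varepsilon\cdot D)w_N-\Delta w_N-P_N\bigl[\widetilde b(\cdot-\mathrm i a\varepsilon)W_N\bigr]\Bigr)dr+W_N\,dW(r),
\]
where $W_N:=e^{a\varepsilon\cdot D}Z_N$. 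Here we use $a(r)\le\rho<R$, so the translated coefficient is controlled by $|\widetilde b(\cdot-\mathrm i a\varepsilon)|\le\beta$.

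Next we apply Itô's formula to $\|w_N\|^2$ (real part of the inner product) on $[r,s]$, integrating backward in time. This gives
\[
\|w_N(r)\|^2+\int_r^s\|W_N\|^2\,d\tau=\|P_N\xi\|^2+\int_r^s 2\operatorname{Re}\langle -a'\,\varepsilon\cdot D w_N+\Delta w_N+P_N[\widetilde b_a W_N],w_N\rangle\,d\tau-\int_r^s2\operatorname{Re}\langle W_N,w_N\rangle\,dW .
\]
We bound the three drift contributions separately.

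\begin{itemize}
\item \textbf{Laplacian term.} It contributes $-2\sum|\mathbf k|^2|\widehat w(\mathbf k)|^2$.
\item \textbf{Translation term.} It contributes at most $2|a'|\sum|\mathbf k|_1|\widehat w|^2$. By the mode-wise inequality $2|a'||\mathbf k|_1\le 2|\mathbf k|^2+n|a'|^2/2$, which follows from $|\mathbf k|_1^2\le n|\mathbf k|^2$, the Laplacian absorbs it up to $\frac{n\rho^2}{2(s-t)^2}\|w_N\|^2$.
\item \textbf{Coupling term.} It is bounded by $2\beta\|W_N\|\|w_N\|\le\|W_N\|^2+\beta^2\|w_N\|^2$, and the $\|W_N\|^2$ is absorbed by the Itô correction on the left.
\end{itemize}

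Taking expectations kills the martingale term, which is a true martingale because everything lives in finite dimensions with square-integrable moments. This leaves
\[
\mathbb E\|w_N(r)\|^2\le\mathbb E\|P_N\xi\|^2+\mathbb E\int_r^s\Bigl(\tfrac{n\rho^2}{2(s-t)^2}+\beta^2\Bigr)\|w_N\|^2d\tau .
\]
The main obstacle is that $\beta$ is random, so a deterministic Gronwall on expectations does not apply directly. We handle this with a stochastic Gronwall, i.e.\ Itô's formula for
\[
\exp\Bigl(\int_t^r\bigl(\tfrac{n\rho^2}{2(s-t)^2}+\beta^2\bigr)d\tau\Bigr)\|w_N(r)\|^2 .
\]
Equivalently, we apply Itô directly to $\Gamma(r)\|w_N(r)\|^2$ with $\Gamma(r)=\exp\bigl(\int_r^s(\frac{n\rho^2}{2(s-t)^2}+\beta^2)d\tau\bigr)$, which is adapted. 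Its $dr$-derivative cancels the positive drift pathwise before expectation. Using \eqref{eq:coefficient-budget}, $\Gamma(t)\ge 1$ and $\Gamma\le \exp(\frac{n\rho^2}{2(s-t)}+C_1^2)$ uniformly in $\omega$. Hence
\[
\mathbb E\|w_N(t)\|^2\le\mathbb E\,\Gamma(t)\|w_N(t)\|^2\le\mathbb E\,\Gamma(s)\|P_N\xi\|^2\le \exp\Bigl(\tfrac{n\rho^2}{2(s-t)}+C_1^2\Bigr)\mathbb E\|P_N\xi\|^2 ,
\]
which is \eqref{eq:one-sided-shift-estimate}. Here we must check that the weighted martingale $\int\Gamma\operatorname{Re}\langle W_N,w_N\rangle dW$ has zero mean; this holds because $\Gamma$ is bounded and $w_N,W_N$ are square integrable in finite dimensions.

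Finally, for \eqref{eq:finite-analytic-norm-estimate} we use the pointwise bound $e^{2\rho|\mathbf k|_1}\le\sum_{\varepsilon\in\{\pm1\}^n}e^{2\rho\varepsilon\cdot\mathbf k}$, since the term with $\varepsilon_j=\operatorname{sgn}k_j$ appears in the sum. By Parseval this gives
\[
\|e^{\rho|D|_1}v\|^2\le\sum_\varepsilon\|e^{\rho\varepsilon\cdot D}v\|^2 .
\]
Summing the $2^n$ one-sided estimates and using $\|P_N z(s)\|\le\|z(s)\|$ then concludes the proof.
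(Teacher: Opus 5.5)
Your argument is essentially the paper's proof. It uses the same linear translation width $a(r)=\sigma(r)$ and the same It\^o computation for $\|e^{a\varepsilon\cdot D}z_N\|^2$. Your modewise bounds $2|a'||\mathbf k|_1\le 2|\mathbf k|^2+n|a'|^2/2$ and $2\beta\|W_N\|\|w_N\|\le\|W_N\|^2+\beta^2\|w_N\|^2$ are the inequality versions of the paper's two completed squares and give the same constants. These are followed by a pathwise exponential weight and the sum over sign vectors.

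One correction is needed. The weight must be the forward one, $\Gamma(r)=\exp\bigl(\int_t^r(\tfrac{n\rho^2}{2(s-t)^2}+\beta^2)\,d\tau\bigr)$, which is exactly the paper's $\Gamma$. Your ``equivalent'' weight $\exp\bigl(\int_r^s(\cdots)\,d\tau\bigr)$ fails for three reasons:
\begin{itemize}
\item It is not adapted, since it depends on $\beta$ over $[r,s]$. The stochastic integral against it therefore need not have zero mean.
\item Its $dr$-derivative has the wrong sign to cancel the negative part of the drift of $\|w_N\|^2$.
\item It gives $\Gamma(s)=1$, so your final chain would assert $\mathbb E\|w_N(t)\|^2\le\mathbb E\|P_N\xi\|^2$. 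This is false in general, e.g.\ for $b=0$, $n=1$, $\xi=e_1$ and $\rho>s-t$.
\end{itemize}
With the forward weight, $\Gamma(t)=1$ and $\Gamma(s)\le\exp\bigl(\frac{n\rho^2}{2(s-t)}+C_1^2\bigr)$, and your chain is then exactly the paper's.
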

\begin{proof} We divide the proof into two steps.

\textbf{Step 1.} We first prove the estimate for a fixed sign vector.
Fix \(N\in\mathbb N\) and \(\varepsilon\in\{+1,-1\}^n\).
Set \(\delta=s-t\) and \(\sigma(r)=\rho(s-r)/\delta\), \(r\in[t,s]\).
Then \(\sigma(t)=\rho\), \(\sigma(s)=0\), and \(\sigma'=-\rho/\delta\).
Define
\begin{align}\label{eq:shifted-galerkin-variables}
X_N^\varepsilon(r)=e^{\sigma(r)\varepsilon\cdot D}z_N(r),
\qquad
Y_N^\varepsilon(r)=e^{\sigma(r)\varepsilon\cdot D}Z_N(r), \qquad b_\varepsilon(r,x)=\widetilde b(r,x-\mathrm{i}\sigma(r)\varepsilon).
\end{align}
Since \(0\leq\sigma(r)\leq\rho<R\),
\eqref{eq:analytic-space-norm} gives
\begin{align}\label{eq:shifted-coefficient-bound}
|b_\varepsilon(\omega,r,x)|\leq\beta(\omega,r).
\end{align}
By \eqref{eq:shifted-galerkin-variables} and
\eqref{eq:projected-shift-identity}, we have
\begin{align*}
e^{\sigma\varepsilon\cdot D}P_N(bZ_N)=P_N(b_\varepsilon Y_N^\varepsilon).
\end{align*}
For fixed $N$, the multiplier $e^{\sigma(r)\varepsilon\cdot D}$ is continuously differentiable in $r$ on $H_N$.
Since \(D_j\), \(P_N\), and \(e^{\sigma\varepsilon\cdot D}\) commute,
It\^o's formula gives
\begin{align}\label{eq:shifted-bsde}
dX_N^\varepsilon
=\bigl[-\Delta X_N^\varepsilon
+\sigma'\varepsilon\cdot DX_N^\varepsilon
-P_N(b_\varepsilon Y_N^\varepsilon)\bigr]\,dr
+Y_N^\varepsilon\,dW(r).
\end{align}
Since \(X_N^\varepsilon\in H_N\) and \(P_N\) is an orthogonal projection,
\begin{align*}
\bigl\langle X_N^\varepsilon,P_N(b_\varepsilon Y_N^\varepsilon)\bigr\rangle_{L^2(\mathbb{T}^n)}
&=\bigl\langle P_N X_N^\varepsilon,b_\varepsilon Y_N^\varepsilon\bigr\rangle_{L^2(\mathbb{T}^n)}
=\bigl\langle X_N^\varepsilon,b_\varepsilon Y_N^\varepsilon\bigr\rangle_{L^2(\mathbb{T}^n)}.
\end{align*}
Applying It\^o's formula to \(\lVert X_N^\varepsilon\rVert_{L^2(\mathbb{T}^n)}^2\)
in \eqref{eq:shifted-bsde}, we obtain
\begin{align}\label{9.13-eq1}
d\lVert X_N^\varepsilon\rVert_{L^2(\mathbb{T}^n)}^2
&=\Big[
2\sum_{j=1}^n\lVert D_jX_N^\varepsilon\rVert_{L^2(\mathbb{T}^n)}^2
+2\sigma'\sum_{j=1}^n\varepsilon_j
\operatorname{Re}\langle D_jX_N^\varepsilon,X_N^\varepsilon\rangle_{L^2(\mathbb{T}^n)}\notag\\
&\qquad
+\lVert Y_N^\varepsilon\rVert_{L^2(\mathbb{T}^n)}^2
-2\operatorname{Re}\bigl\langle X_N^\varepsilon,b_\varepsilon Y_N^\varepsilon\bigr\rangle_{L^2(\mathbb{T}^n)}
\Big]\,dr
+2\operatorname{Re}\langle X_N^\varepsilon,Y_N^\varepsilon\rangle_{L^2(\mathbb{T}^n)}\,dW(r).
\end{align}
The first two terms on the right-hand side of \eqref{9.13-eq1} can be rewritten as
\begin{align}
&2\sum_{j=1}^n\lVert D_jX_N^\varepsilon\rVert_{L^2(\mathbb{T}^n)}^2
+2\sigma'\sum_{j=1}^n\varepsilon_j
\operatorname{Re}\langle D_jX_N^\varepsilon,X_N^\varepsilon\rangle_{L^2(\mathbb{T}^n)}\notag\\
= &2\sum_{j=1}^n
\biggl\lVert \biggl(D_j+\frac{\sigma'\varepsilon_j}2\biggr)
X_N^\varepsilon\biggr\rVert_{L^2(\mathbb{T}^n)}^2
-\frac{n\rho^2}{2\delta^2}\lVert X_N^\varepsilon\rVert_{L^2(\mathbb{T}^n)}^2,
\label{eq:complete-square-D}
\end{align}
and the third and fourth terms can be rewritten as
\begin{align}
\lVert Y_N^\varepsilon\rVert_{L^2(\mathbb{T}^n)}^2
-2\operatorname{Re}\bigl\langle X_N^\varepsilon,b_\varepsilon Y_N^\varepsilon\bigr\rangle_{L^2(\mathbb{T}^n)}
={}&\bigl\lVert Y_N^\varepsilon-
\overline{b_\varepsilon}X_N^\varepsilon\bigr\rVert_{L^2(\mathbb{T}^n)}^2
-\lVert b_\varepsilon X_N^\varepsilon\rVert_{L^2(\mathbb{T}^n)}^2.
\label{eq:complete-square-q}
\end{align}
To compensate for the two negative terms in \eqref{eq:complete-square-D} and \eqref{eq:complete-square-q}, set
\begin{align*}
\Gamma(r)=\exp\biggl(\int_t^r
\biggl[\frac{n\rho^2}{2\delta^2}+\beta(\omega,\tau)^2\biggr]
\,d\tau\biggr).
\end{align*}
Since \(d\Gamma=(n\rho^2/(2\delta^2)+\beta^2)\Gamma\,dr\), the product rule
together with \eqref{eq:complete-square-D}--\eqref{eq:complete-square-q} yields
\begin{align}
&d\bigl(\Gamma\lVert X_N^\varepsilon\rVert_{L^2(\mathbb{T}^n)}^2\bigr)\notag
\\ &= \Gamma\,d\lVert X_N^\varepsilon\rVert_{L^2(\mathbb{T}^n)}^2
+\lVert X_N^\varepsilon\rVert_{L^2(\mathbb{T}^n)}^2\,d\Gamma 
\label{eq:weighted-positive-identity}
\\
&= \Gamma\biggl[
2\sum_{j=1}^n\biggl\lVert \biggl(D_j+\frac{\sigma'\varepsilon_j}2\biggr)
X_N^\varepsilon\biggr\rVert_{L^2(\mathbb{T}^n)}^2
+\bigl\lVert Y_N^\varepsilon-
\overline{b_\varepsilon}X_N^\varepsilon\bigr\rVert_{L^2(\mathbb{T}^n)}^2
+\beta^2\lVert X_N^\varepsilon\rVert_{L^2(\mathbb{T}^n)}^2 
-\lVert b_\varepsilon X_N^\varepsilon\rVert_{L^2(\mathbb{T}^n)}^2
\biggr]\,dr+\,dM_r,\notag
\end{align}
where the stochastic integral is given by
\begin{align*}
dM_r=2\Gamma(r)\operatorname{Re}\langle X_N^\varepsilon(r),Y_N^\varepsilon(r)\rangle_{L^2(\mathbb{T}^n)}\,dW(r),
\qquad M_t=0.
\end{align*}
By \eqref{eq:coefficient-budget}, $1\leq\Gamma(r)\leq\exp(n\rho^2/(2\delta)+C_1^2)$.
Since the Galerkin solution satisfies
$z_N\in L^2_{\mathbb F}(\Omega;C([0,s];H_N))$ and $Z_N\in L^2_{\mathbb F}(0,s;H_N)$,
the bound on $\Gamma$ and the boundedness of $e^{\sigma(r)\varepsilon\cdot D}$ on $H_N$ yield
\begin{align*}
\mathbb E\biggl(\int_t^s
\bigl|2\Gamma(r)\operatorname{Re}\langle X_N^\varepsilon(r),Y_N^\varepsilon(r)\rangle_{L^2(\mathbb T^n)}\bigr|^2\,dr\biggr)^{1/2}<\infty.
\end{align*}
Hence the stochastic integral has zero expectation by
\cite[Theorem~2.130, item~3)]{Lue2021a} with $p=1$.
The drift term in \eqref{eq:weighted-positive-identity} is nonnegative by
\eqref{eq:shifted-coefficient-bound}. Integrating from \(t\) to \(s\) and
taking expectations, using \(\Gamma(t)=1\), \(\sigma(t)=\rho\), and \(\sigma(s)=0\),
we obtain
\begin{align*}
\mathbb{E}\bigl\lVert e^{\rho\varepsilon\cdot D}z_N(t)\bigr\rVert_{L^2(\mathbb{T}^n)}^2
&\leq\mathbb{E}\bigl[\Gamma(s)\lVert P_N z(s)\rVert_{L^2(\mathbb{T}^n)}^2\bigr] \leq
\exp\biggl(\frac{n\rho^2}{2\delta}+C_1^2\biggr)
\mathbb{E}\lVert P_N z(s)\rVert_{L^2(\mathbb{T}^n)}^2,
\end{align*}
which proves \eqref{eq:one-sided-shift-estimate}.

\textbf{Step 2.} We now combine the estimates over all sign vectors.
For every \(\mathbf k\in\mathbb Z^n\), choose a sign vector \(\varepsilon\) such that
\(\varepsilon\cdot \mathbf k=|\mathbf k|_1\). Hence
\begin{align*}
e^{2\rho|\mathbf k|_1}
\leq\sum_{\varepsilon\in\{+1,-1\}^n}e^{2\rho\varepsilon\cdot \mathbf k}.
\end{align*}
Parseval's identity then gives
\begin{align*}
\bigl\lVert e^{\rho|D|_1}z_N(t)\bigr\rVert_{L^2(\mathbb{T}^n)}^2
\leq
\sum_{\varepsilon\in\{+1,-1\}^n}
\bigl\lVert e^{\rho\varepsilon\cdot D}z_N(t)\bigr\rVert_{L^2(\mathbb{T}^n)}^2.
\end{align*}
Applying \eqref{eq:one-sided-shift-estimate} to each term and using
\(\|P_Nz(s)\|_{L^2(\mathbb{T}^n)}\leq\|z(s)\|_{L^2(\mathbb{T}^n)}\) proves
\eqref{eq:finite-analytic-norm-estimate}.
\end{proof}

\subsection{Proof of the analytic smoothing estimate}

\begin{proof}[Proof of \Cref{thm:analytic-smoothing}]
We begin by passing to the limit in the  estimate \eqref{eq:finite-analytic-norm-estimate}. Fix \(0\leq t<s\leq T\), \(0<\rho<R\), and \(K\in\mathbb N\).
By \Cref{lem:galerkin-identification}, \(P_K z_N(t)\to P_K z(t)\) strongly in
\(L^2_{\mathcal F_t}(\Omega;L^2(\mathbb T^n))\). Since \(e^{\rho|D|_1}\) is bounded on \(H_K\), \eqref{eq:finite-analytic-norm-estimate} gives
\begin{align*}
\mathbb{E}\bigl\lVert e^{\rho|D|_1}P_K z(t)\bigr\rVert_{L^2(\mathbb T^n)}^2
&=\lim_{N\to\infty}
\mathbb{E}\bigl\lVert e^{\rho|D|_1}P_K z_N(t)\bigr\rVert_{L^2(\mathbb T^n)}^2\\
&\leq
2^n\exp\biggl(\frac{n\rho^2}{2(s-t)}+C_1^2\biggr)
\mathbb{E}\lVert z(s)\rVert_{L^2(\mathbb T^n)}^2.
\end{align*}
Letting \(K\to\infty\) and using monotone convergence of the Fourier series yields
\begin{equation}
	\label{eq:infinite-analytic-norm-estimate}
	\mathbb{E}\bigl\lVert e^{\rho|D|_1}z(t)\bigr\rVert_{L^2(\mathbb T^n)}^2
	\leq
	2^n\exp\biggl(\frac{n\rho^2}{2(s-t)}+C_1^2\biggr)
	\mathbb{E}\lVert z(s)\rVert_{L^2(\mathbb T^n)}^2.
\end{equation}

To construct the extension, set \(z_{\mathbf k}(t)=\widehat{z(t)}(\mathbf k)\) and write
\begin{align*}
z(t,x)=\sum_{\mathbf k\in\mathbb Z^n}z_{\mathbf k}(t)e^{\mathrm{i} \mathbf k\cdot x},
\qquad z_{\mathbf k}(t)\in L^2_{\mathcal{F}_t}(\Omega;\mathbb C).
\end{align*}
For \(\zeta\in\overline{S_r^n}\) with \(0<r<\rho\), the Cauchy--Schwarz inequality gives
\begin{align}\label{9.13-eq2}
\sum_{\mathbf k\in\mathbb Z^n}
\lVert z_{\mathbf k}(t)e^{\mathrm{i} \mathbf k\cdot \zeta}\rVert_{L^2(\Omega)}
&\leq
\Bigl(\sum_{\mathbf k\in\mathbb Z^n}e^{-2(\rho-r)|\mathbf k|_1}\Bigr)^{1/2}
\Bigl(\sum_{\mathbf k\in\mathbb Z^n}e^{2\rho|\mathbf k|_1}
\lVert z_{\mathbf k}(t)\rVert_{L^2(\Omega)}^2\Bigr)^{1/2}.
\end{align}
The first factor on the right-hand side of \eqref{9.13-eq2} is finite,
while the second is bounded by \eqref{eq:infinite-analytic-norm-estimate}. Hence the Fourier series
converges absolutely and uniformly in \(L^2_{\mathcal{F}_t}(\Omega;\mathbb C)\)
on every smaller closed polystrip. Its sum \(\mathscr Z_t\) is holomorphic on \(S_\rho^n\),
periodic, and agrees with \(z(t)\) on the real torus $\mathbb T^n$. The same estimate yields
\begin{align}\label{eq:general-strip-bound}
\sup_{\zeta\in\overline{S_r^n}}
\lVert \mathscr Z_t(\zeta)\rVert_{L^2(\Omega)}
\leq
C
\exp\biggl(\frac{n\rho^2}{4(s-t)}+\frac{C_1^2}{2}\biggr)
\lVert z(s)\rVert_{L^2_{\mathcal{F}_s}(\Omega;L^2(\mathbb T^n))}.
\end{align}
Here \(C\) depends only on \(n\) and \(\rho-r\).
Taking \(\rho=3R/4\) and \(r=R/2\) proves \eqref{eq:fixed-strip-smoothing}.
\end{proof}

\section{Propagation of smallness and observability}\label{sec:propagation}

We combine analytic propagation of smallness with
\eqref{eq:fixed-strip-smoothing} and then apply a telescoping argument.

\subsection{Analytic propagation and one-time interpolation}

For a multi-index $\alpha=(\alpha_1,\ldots,\alpha_n)\in\mathbb N_0^n$,
where $\mathbb N_0=\{0,1,\ldots\}$, we use
\begin{align*}
|\alpha|=\sum_{j=1}^n\alpha_j,\qquad
\alpha!=\prod_{j=1}^n\alpha_j!,\qquad
\partial^\alpha=\partial_{x_1}^{\alpha_1}\cdots\partial_{x_n}^{\alpha_n}.
\end{align*}
We first recall \cite[Theorem~4]{Apraiz2014}.
\begin{lemma}[Multidimensional analytic propagation]
\label{thm:scalar-propagation}
Let \(\ell>0\), let \(E\subset B_\ell(x_0)\subset\mathbb R^n\) be measurable with positive Lebesgue measure \(|E|\), and let \(f:B_{2\ell}(x_0)\to\mathbb R\) be real-analytic. Assume that, for some \(\mathcal M>0\) and \(0<\varrho\le1\),
\[
|\partial^\alpha f(x)|
\le \mathcal M\frac{|\alpha|!}{(\varrho\ell)^{|\alpha|}},
\qquad x\in B_{2\ell}(x_0),\quad \alpha\in\mathbb N_0^n.
\]
Then there exist constants \(N\ge1\) and \(\vartheta\in(0,1)\), depending only on \(n\), \(\varrho\), and \(|E|/|B_\ell(x_0)|\), such that
\[
\|f\|_{L^\infty(B_\ell(x_0))}
\le N
\left(\frac1{|E|}\int_E|f(x)|\,dx\right)^\vartheta
\mathcal M^{1-\vartheta}.
\]
\end{lemma}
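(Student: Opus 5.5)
We plan to reduce to a one-dimensional Remez-type inequality for holomorphic functions, along the lines of Kovrijkine and Nadirashvili--Vessella, which is also how \cite{Apraiz2014} proceeds. First, normalize. Translating and dilating via \(x\mapsto x_0+\ell x\) preserves the derivative hypothesis with \(\ell=1\), and preserves the ratio \(|E|/|B_\ell(x_0)|\). Dividing \(f\) by \(\mathcal M\) makes the target inequality homogeneous of degree one, since \(\vartheta+(1-\vartheta)=1\). So we may assume \(x_0=0\), \(\ell=1\), \(\mathcal M=1\), and we may also assume \(m:=\|f\|_{L^\infty(B_1)}>0\).

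\textbf{Step 1 (reduce \(L^1(E)\) to \(L^\infty\)).} Set
\[
E'=\Bigl\{x\in E:\ |f(x)|\le 2|E|^{-1}\textstyle\int_E|f|\Bigr\}.
\]
By Chebyshev, \(|E'|\ge|E|/2\). It therefore suffices to prove \(m\le N(\sup_{E'}|f|)^\vartheta\).

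\textbf{Step 2 (complexify).} The bound \(|\partial^\alpha f|\le|\alpha|!\varrho^{-|\alpha|}\) on \(B_2\), summed as a Taylor series and using \(\sum_{|\alpha|=k}k!/\alpha!=n^k\), shows that \(f\) extends holomorphically to \(\{x+\mathrm{i}y:\ x\in B_2,\ |y|_1<\varrho/(2n)\}\). On this set the extension satisfies \(|f|\le C(n)\). This is where the dependence on \(\varrho\) enters.

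\textbf{Step 3 (find a good line).} Let \(x^*\in\overline{B_1}\) attain \(|f(x^*)|=m\). Integrate in polar coordinates centred at \(x^*\); since \(B_1\subset B_2(x^*)\), this gives
\[
|E'|\le\int_{S^{n-1}}\int_0^2\mathbf 1_{E'}(x^*+r\omega)r^{n-1}\,dr\,d\omega .
\]
Hence some direction \(\omega\) satisfies \(|\{r\in[0,2]: x^*+r\omega\in E'\}|\ge c_n|E'|\). Set \(\varphi(w)=f(x^*+w\omega)\). By Step 2, \(\varphi\) is holomorphic and bounded by \(C(n)\) on a \(\varrho/(2n)\)-neighbourhood of the segment \([-2,2]\subset\mathbb C\), and \(|\varphi(0)|=m\).

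\textbf{Step 4 (one-dimensional Remez).} We would use the holomorphic Remez lemma: if \(\varphi\) is holomorphic on a fixed neighbourhood \(U\) of an interval \(I\), with \(|\varphi|\le M'\) on \(U\) and \(|\varphi(0)|\ge m\) for some \(0\in I\), then for every measurable \(A\subset I\)
\[
\sup_I|\varphi|\le\Bigl(\frac{C}{|A|}\Bigr)^{\kappa\log(M'/m)}\sup_A|\varphi| .
\]
Here \(C\) and \(\kappa\) depend only on the geometry of \((U,I)\), hence only on \(n\) and \(\varrho\). The proof goes through Blaschke/Jensen factorization: the number of zeros near \(I\) is at most \(\kappa\log(M'/m)\). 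Dividing them out leaves a zero-free factor, which is controlled by Harnack's inequality, and the polynomial part is handled by the classical Remez inequality. Apply this with \(A\) the good set from Step 3, and use \(\sup_I|\varphi|\ge m\). This gives
\[
m\le (C/|E|)^{\kappa\log(C(n)/m)}\sup_{E'}|f| .
\]
Writing \((C/|E|)^{\kappa\log(C(n)/m)}=(C(n)/m)^{\gamma}\) with \(\gamma=\kappa\log(C/|E|)\), we get \(m^{1+\gamma}\le C(n)^{\gamma}\sup_{E'}|f|\). This is the claim with \(\vartheta=1/(1+\gamma)\) and \(N=C(n)^{\gamma/(1+\gamma)}\), together with the factor \(2^\vartheta\) from Step 1. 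Since \(|B_1|\) is fixed after normalization, these constants depend only on \(n\), \(\varrho\) and \(|E|/|B_\ell(x_0)|\).

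\textbf{Main obstacle.} The delicate point is Step 4. The strip of holomorphy has width only \(\sim\varrho\), much smaller than the segment length, so a single disk around the segment is not available. Our first attempt would be to cover \([-2,2]\) by a chain of \(O(n/\varrho)\) overlapping disks of radius \(\sim\varrho/(4n)\). We would apply the disk version of the Remez lemma on the disk that contains the largest portion of \(A\). The lower bound \(m\) would be propagated from the disk at \(0\) to that disk by Hadamard three-circle iteration along the chain. This worsens \(\kappa\) by a factor depending only on \(n/\varrho\), which is acceptable for the stated dependence of the constants.
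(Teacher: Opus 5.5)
The paper does not prove this lemma: it states it as ``We first recall \cite[Theorem~4]{Apraiz2014}'' and uses it only as a black box in \Cref{prop:hilbert-propagation}. Your proposal is therefore a self-contained reconstruction rather than a variant of an argument in the paper. The architecture you use is the standard reduction to one dimension: normalize, pick a maximum point \(x^*\), use polar coordinates about \(x^*\) to find a ray carrying a fixed fraction of \(E'\), and restrict \(f\) to that ray. For the one-dimensional step you build a Kovrijkine-type Jensen/Harnack/Remez lemma and adapt it to a thin strip by a chain of discs with three-circle iteration, instead of quoting a one-dimensional propagation result. This gives an explicit proof with traceable constants: the exponent degrades like \(\tau^{-K}\), where \(\tau\in(0,1)\) is the three-circle exponent and \(K\) is the number of discs, which depends only on \(n\) and \(\varrho\). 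The paper gains brevity by citing instead. Your argument is sound, but two points need repair.

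First, in Step~3 the function \(\varphi(w)=f(x^*+w\omega)\) is not defined near all of \([-2,2]\). If \(|x^*|=1\) and \(\omega=x^*\), then \(|x^*+2\omega|=3\), which lies outside \(B_2\). The fix is immediate. Since \(E'\subset B_1\) and \(\overline{B_1}\) is convex, your good set lies in the chord \(J=\{w\in[0,2]:x^*+w\omega\in\overline{B_1}\}\), an interval containing \(0\). Real points within distance \(1\) of \(J\) map into \(B_2\), so \(\varphi\) is holomorphic and bounded on a neighbourhood of \(J\) whose width depends only on \(n\) and \(\varrho\). Note that \(|\omega|_1\le\sqrt n\), so the width inherited from Step~2 is \(\varrho/(2n\sqrt n)\), not \(\varrho/(2n)\). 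Run the chain of discs on \(J\), which has length at most \(2\).

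Second, three-circle iteration propagates lower bounds for the maxima of \(|\varphi|\) over the complex discs \(D_j\), not for \(\sup_{J\cap D_j}|\varphi|\). So the final disc Remez lemma must be applied with the lower bound \(|\varphi(z_0)|\ge\mu\) assumed at a complex point \(z_0\) of the inner disc. This variant holds by the same proof. Centre Jensen's zero count and Harnack's inequality at \(z_0\). Handle the polynomial factor by comparing \(|P(z_0)|\le(Cr)^d\) with P\'olya's bound \(\sup_A|P|\ge2(|A|/4)^d\) for monic \(P\) of degree \(d\). This yields \(\sup_A|\varphi|\ge M'(m/M')^{\gamma'}\) with \(\gamma'=\tau^{-K}\bigl(1+\kappa\log(CK/|A|)\bigr)\), which fits your final algebra and gives \(\vartheta=1/\gamma'\).
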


We next extend this estimate to periodic Hilbert space-valued functions.

\begin{proposition}[Periodic Hilbert space-valued propagation]
\label{prop:hilbert-propagation}
Let \(H\) be a separable complex Hilbert space, let \(r>0\), and let \(E\subset\mathbb T^n\) be a measurable set of positive measure. Then there exist constants \(C>0\) and \(\theta\in(0,1)\), depending only on \(n\), \(r\), and \(|E|\), such that the following holds. If \(f:S_r^n\to H\) is holomorphic, \(2\pi\)-periodic in each coordinate, and satisfies
\[
\sup_{\zeta\in S_r^n}\|f(\zeta)\|_H\le\mathcal M
\]
for some \(\mathcal M\ge0\), then
\begin{equation}\label{eq:hilbert-propagation}
\|f\|_{L^2(\mathbb T^n;H)}
\le C
\|f\|_{L^2(E;H)}^\theta
\mathcal M^{1-\theta}.
\end{equation}
In particular, the constants \(C\) and \(\theta\) do not depend on \(H\).
\end{proposition}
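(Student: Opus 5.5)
The plan is to reduce the Hilbert-valued statement to the scalar \Cref{thm:scalar-propagation} applied to the single real-analytic function \(g(x)=\|f(x)\|_H^2\) on the real torus. Fix an orthonormal basis \(\{h_m\}\) of \(H\) and write \(f=\sum_m f_m h_m\), with \(f_m(\zeta)=\langle f(\zeta),h_m\rangle_H\) scalar holomorphic, periodic, and \(|f_m|\le\mathcal M\). On the real torus,
\[
g(x)=\sum_m f_m(x)\overline{f_m(x)}
\]
extends holomorphically to \(S_r^n\) via
\[
G(\zeta)=\sum_m f_m(\zeta)\overline{f_m(\bar\zeta)}=\langle f(\zeta),f(\bar\zeta)\rangle_H .
\]
This is holomorphic in \(\zeta\), since \(\zeta\mapsto f(\bar\zeta)\) is antiholomorphic and sits in the antilinear slot. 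By Cauchy--Schwarz, \(|G(\zeta)|\le\mathcal M^2\) on \(S_r^n\). The function \(g\) is real, nonnegative and periodic, and no dimension of \(H\) enters anywhere, which gives the independence of \(C,\theta\) from \(H\).

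Next come the Cauchy estimates. For \(x\in\mathbb R^n\) and a polydisc of radius \(r/2\) in each coordinate (contained in \(S_r^n\)), we get
\[
|\partial^\alpha g(x)|\le \mathcal M^2\alpha!(2/r)^{|\alpha|}\le\mathcal M^2|\alpha|!(2/r)^{|\alpha|}.
\]
Now cover \(\mathbb T^n\), viewed as \([0,2\pi]^n\) in \(\mathbb R^n\), by finitely many balls \(B_\ell(x_j)\), with \(\ell\) chosen in terms of \(r\) and \(n\), for instance \(\ell=\sqrt n\,\pi\) so that one ball centered at the cube's center covers it. Then take \(\varrho=\min(1,2/(r\ell))\) so the derivative hypothesis holds with \(\mathcal M^2\) on \(B_{2\ell}\); periodicity makes \(g\) defined on all of \(\mathbb R^n\). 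The main obstacle is that \(E\) lives on the torus, whereas the lemma wants \(E\subset B_\ell(x_0)\) with \(|E|/|B_\ell|\) controlling the constants. Using the single large ball containing the fundamental cube \([0,2\pi]^n\), set \(E'\) to be the copy of \(E\) in that cube. Then \(|E'|/|B_\ell|=|E|/|B_\ell|\) depends only on \(n\) and \(|E|\), and this avoids needing \(E\) to have mass in each small ball.

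Applying \Cref{thm:scalar-propagation} to \(g\) gives
\[
\|g\|_{L^\infty(\mathbb T^n)}\le N\Bigl(\tfrac1{|E|}\int_E g\Bigr)^{\vartheta}\mathcal M^{2(1-\vartheta)}.
\]
Since \(\int_E g=\|f\|_{L^2(E;H)}^2\) and \(\|f\|_{L^2(\mathbb T^n;H)}^2\le(2\pi)^n\|g\|_\infty\), taking square roots yields \eqref{eq:hilbert-propagation} with \(\theta=\vartheta\) and \(C=(2\pi)^{n/2}N^{1/2}|E|^{-\vartheta/2}\). The degenerate case \(\mathcal M=0\) is trivial. A small technical check is that \(G\) is genuinely holomorphic when \(H\) is infinite dimensional: the series converges locally uniformly by Cauchy--Schwarz, and in any case the inner-product formula shows holomorphy directly, because \(f\) is holomorphic as an \(H\)-valued map. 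If one prefers to avoid relying on the lemma's tolerance of large balls, an alternative is to partition the cube into \(M(n,r)\) small balls. By pigeonhole one of them carries at least \(|E|/M\) of \(E\); applying the lemma there and then propagating from that ball to the others along chains of overlapping balls, iterating the lemma with \(E\) replaced by the full previous ball, gives the same conclusion with worse constants.
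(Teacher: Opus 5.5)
Your proof is correct, apart from the small slip noted below. It takes a different route from the paper in how the Hilbert space is eliminated.

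The paper applies the scalar lemma to the whole family $g_{h,\gamma}(x)=\operatorname{Re}\bigl(e^{-\mathrm{i}\gamma}\langle f(x),h\rangle_H\bigr)$, with $\|h\|_H=1$ and $\gamma\in\mathbb R$. It bounds the observation by $\frac1{|E|}\int_E|g_{h,\gamma}|\le\frac1{|E|}\int_E\|f\|_H\le|E|^{-1/2}\|f\|_{L^2(E;H)}$. It then recovers $\|f(x)\|_H$ as the supremum over $(h,\gamma)$, which works because the lemma's constants are uniform in $(h,\gamma)$.

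You instead apply the lemma once, to $g=\|f\|_H^2$. Its extension $\langle f(\zeta),f(\bar\zeta)\rangle_H$ is holomorphic and bounded by $\mathcal M^2$ because $S_r^n$ is invariant under conjugation.

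What each route buys:
\begin{itemize}
\item Yours is shorter. It produces the $L^2(E;H)$ observation exactly, with no Cauchy--Schwarz step, and makes the independence from $H$ evident.
\item The paper's uses only bounded linear functionals, so it would work verbatim for Banach-space-valued $f$, and it needs no Hilbert structure.
\item The paper's route also yields a slightly stronger intermediate bound, with the $L^1$ average $\frac1{|E|}\int_E\|f\|_H$ in place of an $L^2$ average.
\end{itemize}

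\textbf{Fixes needed.} Your choice $\varrho=\min(1,2/(r\ell))$ is inverted. To match Cauchy's bound $\mathcal M^2|\alpha|!\,(2/r)^{|\alpha|}$ with the lemma's hypothesis $\mathcal M^2|\alpha|!/(\varrho\ell)^{|\alpha|}$, you need $\varrho\ell\le r/2$. So take $\varrho=\min\{1,r/(2\ell)\}$; with your formula the hypothesis fails, for example, whenever $r<2$. Also, the corners of $[0,2\pi]^n$ lie on $\partial B_{\sqrt n\pi}(x_0)$, so $E'$ is not strictly contained in the open ball. This is harmless: remove a null set, or enlarge the radius as the paper does with $B_{2\pi\sqrt n}(0)$. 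The pigeonhole/chain-of-balls alternative at the end is unnecessary.
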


\begin{proof}
If \(\mathcal M=0\) or \(H=\{0\}\), the conclusion is immediate. Hence we may assume that \(\mathcal M>0\) and \(H\ne\{0\}\). The proof is divided into two steps.

\smallskip

\textbf{Step 1.} We first apply the scalar propagation estimate.
Regard \(E\) as a subset of \(Q=[-\pi,\pi)^n\), and set \(\varrho=\min\{1,r/(4\pi\sqrt n)\}\). Then
\(E\subset Q\subset B_{2\pi\sqrt n}(0)\) and \(2\pi\sqrt n\varrho \le r/2\).
For each unit vector \(h\in H\) and each \(\gamma\in\mathbb R\), define
\begin{align*}
g_{h,\gamma}(x)
=\operatorname{Re}\bigl(e^{-\mathrm{i}\gamma}\langle f(x),h\rangle_H\bigr),
\qquad x\in\mathbb R^n.
\end{align*}
Since the inner product is linear in its first variable, the function
\(\zeta\mapsto\langle f(\zeta),h\rangle_H\) is holomorphic on \(S_r^n\) and bounded by \(\mathcal M\).
Every polydisc of radius \(r/2\) centered at a real point is contained in \(S_r^n\).
Thus Cauchy's inequality yields, for \(x\in B_{4\pi\sqrt n}(0)\),
\begin{align*}
|\partial^\alpha g_{h,\gamma}(x)|
&\leq |\langle \partial^\alpha f(x),h\rangle_H|
\leq \mathcal M\alpha!\biggl(\frac2r\biggr)^{|\alpha|}
\leq \mathcal M\frac{|\alpha|!}{(2\pi\sqrt n\varrho )^{|\alpha|}}.
\end{align*}
Here we used \(\alpha!\le|\alpha|!\) and \(2\pi\sqrt n\varrho \le r/2\).
Therefore \Cref{thm:scalar-propagation} applies and gives
\begin{align*}
\sup_{x\in B_{2\pi\sqrt n}(0)}|g_{h,\gamma}(x)|
\leq C\biggl(\frac1{|E|}\int_E|g_{h,\gamma}(x)|\,dx\biggr)^\theta
\mathcal M^{1-\theta}.
\end{align*}
For the observation term, we have
\begin{align*}
\frac1{|E|}\int_E|g_{h,\gamma}(x)|\,dx
&\leq\frac1{|E|}\int_E\|f(x)\|_H\,dx
\leq |E|^{-1/2}\|f\|_{L^2(E;H)}.
\end{align*}
Consequently,
\begin{align*}
\sup_{x\in B_{2\pi\sqrt n}(0)}|g_{h,\gamma}(x)|
\leq C|E|^{-\theta/2}
\|f\|_{L^2(E;H)}^\theta\mathcal M^{1-\theta},
\end{align*}
where \(C\) and \(\theta\) depend only on \(n\), \(r\), and \(|E|\), and are independent of \(h\), \(\gamma\), and \(H\).

\smallskip

\textbf{Step 2.} We now recover the Hilbert-space norm.
For every \(x\in Q\), the dual characterization of the norm yields
\begin{align*}
\|f(x)\|_H
&=\sup_{\|h\|_H=1}|\langle f(x),h\rangle_H|
=\sup_{\|h\|_H=1,\,\gamma\in\mathbb R}
\bigl|\operatorname{Re}\bigl(e^{-\mathrm{i}\gamma}\langle f(x),h\rangle_H\bigr)\bigr|.
\end{align*}
Taking the supremum over \(h\) and \(\gamma\) in the estimate from Step~1, we obtain
\begin{align*}
\sup_{x\in Q}\|f(x)\|_H
\leq C|E|^{-\theta/2}
\|f\|_{L^2(E;H)}^\theta\mathcal M^{1-\theta}.
\end{align*}
Since \(f\) is periodic, integrating over \(Q\) gives
\begin{align*}
\|f\|_{L^2(\mathbb T^n;H)}
&=\biggl(\int_Q\|f(x)\|_H^2\,dx\biggr)^{1/2}\leq |Q|^{1/2}\sup_{x\in Q}\|f(x)\|_H
\leq C\|f\|_{L^2(E;H)}^\theta\mathcal M^{1-\theta}.
\end{align*}
This proves \eqref{eq:hilbert-propagation}, with constants independent of \(H\).
\end{proof}

Combining \eqref{eq:hilbert-propagation} with \eqref{eq:fixed-strip-smoothing}
gives the following interpolation estimate.
\begin{corollary}[One-time interpolation]\label{cor:one-time-interpolation}
Let \((z,Z)\) solve \eqref{eq:backward}. Then there exist constants \(C>0\) and \(\theta\in(0,1)\), where \(C\) depends only on \(n,R,C_1,|G|\) and \(\theta\) depends only on \(n,R,|G|\), such that for all \(0\le t<s\le T\),
\begin{align}\label{eq:one-time-multiplicative}
\lVert z(t)\rVert_{L^2_{\mathcal{F}_t}(\Omega;L^2(\mathbb{T}^n))}
&\leq C\exp\biggl(\frac{C}{s-t}\biggr)
\lVert z(t)\rVert_{L^2_{\mathcal{F}_t}(\Omega;L^2(G))}^\theta
\lVert z(s)\rVert_{L^2_{\mathcal{F}_s}(\Omega;L^2(\mathbb{T}^n))}^{1-\theta}.
\end{align}
Equivalently, for every \(\varepsilon>0\),
\begin{align}\label{eq:one-time-epsilon}
\lVert z(t)\rVert_{L^2_{\mathcal{F}_t}(\Omega;L^2(\mathbb{T}^n))}
&\leq \varepsilon
\lVert z(s)\rVert_{L^2_{\mathcal{F}_s}(\Omega;L^2(\mathbb{T}^n))}
+C\varepsilon^{-\frac{1-\theta}{\theta}}
\exp\biggl(\frac{C}{s-t}\biggr)
\lVert z(t)\rVert_{L^2_{\mathcal{F}_t}(\Omega;L^2(G))}.
\end{align}
\end{corollary}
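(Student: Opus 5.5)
We apply \Cref{prop:hilbert-propagation} to the holomorphic extension of \(z(t)\) constructed in \Cref{thm:analytic-smoothing}. Take \(H=L^2_{\mathcal F_t}(\Omega;\mathbb C)\), which is separable because \(\mathcal F_t\) is generated by the Brownian motion up to null sets. Take \(r=R/2\), \(E=G\), and \(f=\mathscr Z_t\) restricted to \(S_{R/2}^n\). Then \(f\) is holomorphic and periodic on \(S_{R/2}^n\subset S_{3R/4}^n\). By \eqref{eq:fixed-strip-smoothing},
\[
\sup_{\zeta\in S_{R/2}^n}\|f(\zeta)\|_H\le \mathcal M:=Ce^{C_1^2/2}\exp\Bigl(\frac{9nR^2}{64(s-t)}\Bigr)\|z(s)\|_{L^2_{\mathcal F_s}(\Omega;L^2(\mathbb T^n))}.
\]

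Next we identify the norms. Since \(f\) agrees with \(z(t)\) on the real torus, Fubini gives
\[
\|f\|_{L^2(\mathbb T^n;H)}^2=\int_{\mathbb T^n}\mathbb E|z(t,x)|^2\,dx=\|z(t)\|^2_{L^2_{\mathcal F_t}(\Omega;L^2(\mathbb T^n))},
\]
and likewise \(\|f\|_{L^2(G;H)}=\|z(t)\|_{L^2_{\mathcal F_t}(\Omega;L^2(G))}\). One point needs checking: the \(x\)-pointwise \(L^2(\Omega)\)-valued function must be a jointly measurable representative of \(z(t)\). This follows from the absolutely convergent Fourier series in \(L^2(\Omega)\), which converges to \(z(t)\) in \(L^2(\Omega\times\mathbb T^n)\). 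Substituting into \eqref{eq:hilbert-propagation} then gives \eqref{eq:one-time-multiplicative}, with \(\theta\) depending only on \(n,R,|G|\), and with the constants
\[
C\bigl(Ce^{C_1^2/2}\bigr)^{1-\theta}\quad\text{and}\quad \frac{(1-\theta)\,9nR^2}{64}.
\]
Both are absorbed into \(C\exp(C/(s-t))\).

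The equivalent form \eqref{eq:one-time-epsilon} follows from Young's inequality \(ab\le \varepsilon a^{1/(1-\theta)}+C_\theta\varepsilon^{-(1-\theta)/\theta}b^{1/\theta}\), applied with
\[
a=\|z(s)\|^{1-\theta},\qquad b=C e^{C/(s-t)}\|z(t)\|_{G}^\theta .
\]
Here \(\|z(s)\|\) is the \(L^2_{\mathcal F_s}(\Omega;L^2(\mathbb T^n))\) norm and \(\|z(t)\|_G\) is the \(L^2_{\mathcal F_t}(\Omega;L^2(G))\) norm. This produces \(\bigl(Ce^{C/(s-t)}\bigr)^{1/\theta}\), which is again of the form \(Ce^{C/(s-t)}\) after enlarging \(C\). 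Conversely, optimizing over \(\varepsilon\) recovers \eqref{eq:one-time-multiplicative}.

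The main obstacle is bookkeeping rather than a new estimate. It consists of the measurability and identification of the Hilbert-valued extension with \(z(t)\) on the real torus, and of tracking that \(\theta\) does not depend on \(C_1\). The latter holds because \(\theta\) comes from \Cref{prop:hilbert-propagation} with \(r=R/2\) and \(E=G\) only.
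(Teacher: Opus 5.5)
Your proposal is correct and follows the paper's own proof. It applies \Cref{prop:hilbert-propagation} to $\mathscr Z_t$ with $H=L^2_{\mathcal F_t}(\Omega;\mathbb C)$, strip width $R/2$ and $E=G$, bounds $\mathcal M$ via \eqref{eq:fixed-strip-smoothing}, identifies the norms by Fubini, and obtains \eqref{eq:one-time-epsilon} from Young's inequality with exponents $1/(1-\theta)$ and $1/\theta$; your additional checks on the separability of $H$ and on the jointly measurable representative are sound details that the paper leaves implicit.
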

\begin{proof}
Fix \(0\le t<s\le T\) and set
\(H=L^2_{\mathcal F_t}(\Omega;\mathbb C)\).
By \Cref{thm:analytic-smoothing}, \(\mathscr Z_t\) is an \(H\)-valued
holomorphic extension of \(z(t)\) to \(S_{3R/4}^n\).
Since it agrees with \(z(t)\) on the real torus \(\mathbb T^n\), Fubini's theorem yields
\begin{align*}
\|\mathscr Z_t\|_{L^2(\mathbb T^n;H)}^2
&=\int_{\mathbb T^n}\mathbb E|z(t,x)|^2\,dx
=\|z(t)\|_{L^2_{\mathcal F_t}(\Omega;L^2(\mathbb T^n))}^2,\\
\|\mathscr Z_t\|_{L^2(G;H)}^2
&=\int_G\mathbb E|z(t,x)|^2\,dx
=\|z(t)\|_{L^2_{\mathcal F_t}(\Omega;L^2(G))}^2.
\end{align*}
Applying \Cref{prop:hilbert-propagation} with polystrip width \(R/2\)
and observation set \(G\), and using the preceding identities together with
\eqref{eq:fixed-strip-smoothing}, we obtain
\begin{align*}
&\|z(t)\|_{L^2_{\mathcal F_t}(\Omega;L^2(\mathbb T^n))}\\
&\leq C\|z(t)\|_{L^2_{\mathcal F_t}(\Omega;L^2(G))}^\theta
\Bigl(\sup_{\zeta\in S_{R/2}^n}\|\mathscr Z_t(\zeta)\|_{L^2(\Omega)}\Bigr)^{1-\theta}\\
&\leq C e^{(1-\theta)C_1^2/2}
\exp\biggl(\frac{9nR^2(1-\theta)}{64(s-t)}\biggr)
\|z(t)\|_{L^2_{\mathcal F_t}(\Omega;L^2(G))}^\theta
\|z(s)\|_{L^2_{\mathcal F_s}(\Omega;L^2(\mathbb T^n))}^{1-\theta}.
\end{align*}
Absorbing the fixed factors into the constant \(C\) proves
\eqref{eq:one-time-multiplicative}. The exponent \(\theta\) comes from
\Cref{prop:hilbert-propagation} and therefore depends only on \(n\), \(R\),
and \(|G|\).

We now derive the additive estimate. Young's inequality with conjugate
exponents \(1/(1-\theta)\) and \(1/\theta\) gives, for every \(\varepsilon>0\),
\begin{align*}
&C\exp\biggl(\frac{C}{s-t}\biggr)
\|z(t)\|_{L^2_{\mathcal F_t}(\Omega;L^2(G))}^\theta
\|z(s)\|_{L^2_{\mathcal F_s}(\Omega;L^2(\mathbb T^n))}^{1-\theta}\\
&\leq \varepsilon
\|z(s)\|_{L^2_{\mathcal F_s}(\Omega;L^2(\mathbb T^n))}
+C\varepsilon^{-\frac{1-\theta}{\theta}}
\exp\biggl(\frac{C}{\theta(s-t)}\biggr)
\|z(t)\|_{L^2_{\mathcal F_t}(\Omega;L^2(G))}.
\end{align*}
Combining this with \eqref{eq:one-time-multiplicative} and absorbing
\(1/\theta\) into the constant in the exponential proves
\eqref{eq:one-time-epsilon}.
\end{proof}

\subsection{Energy propagation and observability}

To pass from one-time interpolation to observability, we use the following
energy bound.

\begin{lemma}[One-sided energy propagation]\label{lem:energy-propagation}
Let $(z,Z)$ solve \eqref{eq:backward}.  Then, for
$0\leq t_1<t_2\leq T$,
\begin{align}\label{eq:energy-propagation}
\mathbb{E}\lVert z(t_1)\rVert_{L^2(\mathbb{T}^n)}^2
\leq e^{C_1^2}\mathbb{E}\lVert z(t_2)\rVert_{L^2(\mathbb{T}^n)}^2.
\end{align}
\end{lemma}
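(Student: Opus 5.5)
This is the unshifted ($\rho=0$) version of Step~1 in the proof of \Cref{prop:finite-analytic-energy}, applied on $[t_1,t_2]$ instead of $[t,s]$. We work with the Galerkin system \eqref{eq:galerkin-bsde} on $[0,t_2]$ with terminal datum $z_N(t_2)=P_Nz(t_2)$, i.e.\ we take $s=t_2$. Itô's formula for $\|z_N\|_{L^2(\mathbb T^n)}^2$ gives
\begin{align*}
d\|z_N\|_{L^2(\mathbb T^n)}^2
=\Bigl[2\|\nabla z_N\|_{L^2(\mathbb T^n)}^2+\|Z_N-bz_N\|_{L^2(\mathbb T^n)}^2-\|bz_N\|_{L^2(\mathbb T^n)}^2\Bigr]dr
+2\operatorname{Re}\langle z_N,Z_N\rangle_{L^2(\mathbb T^n)}\,dW(r).
\end{align*}
Here we used $\langle z_N,P_N(bZ_N)\rangle=\langle z_N,bZ_N\rangle$ and completed the square exactly as in \eqref{eq:complete-square-q}, with $b$ real so that $\overline b=b$.

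Next, multiply by the weight
\begin{align*}
\Gamma(r)=\exp\Bigl(\int_{t_1}^r\beta(\tau)^2\,d\tau\Bigr).
\end{align*}
By the product rule, the drift of $\Gamma\|z_N\|^2$ becomes
\begin{align*}
\Gamma\Bigl[2\|\nabla z_N\|^2+\|Z_N-bz_N\|^2+\beta^2\|z_N\|^2-\|bz_N\|^2\Bigr],
\end{align*}
which is nonnegative by \eqref{eq:real-axis-bound}. By \eqref{eq:coefficient-budget}, $1\le\Gamma\le e^{C_1^2}$ almost surely. The stochastic integral has zero expectation by the same integrability argument used in the proof of \Cref{prop:finite-analytic-energy}, since $z_N$ and $Z_N$ are square integrable and $\Gamma$ is bounded. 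Integrating from $t_1$ to $t_2$ and taking expectations then yields
\begin{align*}
\mathbb E\|z_N(t_1)\|^2\le \mathbb E\bigl[\Gamma(t_2)\|P_Nz(t_2)\|^2\bigr]\le e^{C_1^2}\,\mathbb E\|z(t_2)\|^2 .
\end{align*}

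Finally, let $N\to\infty$. By \Cref{lem:galerkin-identification} (with $s=t_2$ and $r=t_1$), $z_N(t_1)\to z(t_1)$ in $L^2_{\mathcal F_{t_1}}(\Omega;L^2(\mathbb T^n))$, and this gives \eqref{eq:energy-propagation}. The only point that needs care is that the weight $\Gamma$ is random and adapted, so we cannot simply pull it out of the expectation. The pathwise bound $\Gamma(t_2)\le e^{C_1^2}$ coming from the $L^\infty(\Omega)$ control in \eqref{eq:coefficient-budget} resolves this. Working at the Galerkin level lets us avoid justifying Itô's formula for the infinite-dimensional mild solution directly.
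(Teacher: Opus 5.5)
Your proof is correct and follows the paper's argument: the same completion of the square, the same weight $\Gamma(r)=\exp(\int_{t_1}^r\beta^2\,d\tau)$, nonnegativity of the drift via \eqref{eq:real-axis-bound}, and the pathwise bound $\Gamma\le e^{C_1^2}$. The only minor difference is where you pass to the limit: you derive the inequality at the Galerkin level and then let $N\to\infty$ in the endpoint term alone via \Cref{lem:galerkin-identification}, whereas the paper passes to the limit in the It\^o identities themselves; your ordering is slightly more economical, since it needs only $z_N(t_1)\to z(t_1)$ rather than convergence of every drift term.
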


\begin{proof}
The following It\^o identities are first obtained for $(z_N,Z_N)$ with $s=t_2$
and then passed to the limit using \Cref{lem:galerkin-identification} and
\eqref{eq:coefficient-budget}. In particular, It\^o's formula
and periodic integration by parts yield
\begin{align*}
d\mathbb E\|z(r)\|_{L^2(\mathbb T^n)}^2
&=\Bigl[2\mathbb E\|\nabla z(r)\|_{L^2(\mathbb T^n)}^2
+\mathbb E\|Z(r)\|_{L^2(\mathbb T^n)}^2
-2\mathbb E\operatorname{Re}\langle b(r)Z(r),z(r)\rangle_{L^2(\mathbb T^n)}\Bigr]\,dr.
\end{align*}
The two terms involving $Z$ can be rewritten as
\begin{align*}
&\mathbb E\|Z(r)\|_{L^2(\mathbb T^n)}^2
-2\mathbb E\operatorname{Re}\langle b(r)Z(r),z(r)\rangle_{L^2(\mathbb T^n)}\\
&=\mathbb E\|Z(r)\|_{L^2(\mathbb T^n)}^2
-2\mathbb E\operatorname{Re}\langle Z(r),\overline{b(r)}z(r)\rangle_{L^2(\mathbb T^n)}
+\mathbb E\|\overline{b(r)}z(r)\|_{L^2(\mathbb T^n)}^2
-\mathbb E\|b(r)z(r)\|_{L^2(\mathbb T^n)}^2\\
&=\mathbb E\|Z(r)-\overline{b(r)}z(r)\|_{L^2(\mathbb T^n)}^2
-\mathbb E\|b(r)z(r)\|_{L^2(\mathbb T^n)}^2.
\end{align*}

To control the negative term, set
\begin{align*}
\Gamma(r)=\exp\Bigl(\int_{t_1}^r\beta(\omega,\tau)^2\,d\tau\Bigr),
\qquad t_1\le r\le t_2.
\end{align*}
Then $\Gamma(t_1)=1$, $\Gamma'=\beta^2\Gamma$ almost everywhere, and
\eqref{eq:coefficient-budget} implies
\begin{align*}
1\le\Gamma(r)\le e^{C_1^2},
\qquad t_1\le r\le t_2,
\quad\text{almost surely}.
\end{align*}
Applying the product formula to $\Gamma(r)\|z(r)\|_{L^2(\mathbb T^n)}^2$
and integrating from $t_1$ to $t_2$, we obtain, after taking expectations,
\begin{align*}
&\mathbb E\bigl[\Gamma(t_2)\|z(t_2)\|_{L^2(\mathbb T^n)}^2\bigr]
-\mathbb E\|z(t_1)\|_{L^2(\mathbb T^n)}^2\\
&=\mathbb E\int_{t_1}^{t_2}\Gamma(r)\Bigl[
2\|\nabla z(r)\|_{L^2(\mathbb T^n)}^2
+\|Z(r)-\overline{b(r)}z(r)\|_{L^2(\mathbb T^n)}^2
+\beta(r)^2\|z(r)\|_{L^2(\mathbb T^n)}^2
-\|b(r)z(r)\|_{L^2(\mathbb T^n)}^2\Bigr]\,dr.
\end{align*}
Here the stochastic integral has zero expectation by the same integrability argument
as in the proof of \Cref{prop:finite-analytic-energy}.
By \eqref{eq:real-axis-bound},
\begin{align*}
\beta(r)^2\|z(r)\|_{L^2(\mathbb T^n)}^2
-\|b(r)z(r)\|_{L^2(\mathbb T^n)}^2\ge0
\qquad\text{almost surely},
\end{align*}
so the entire drift integrand is nonnegative.
Using the bound on $\Gamma$, we obtain
\begin{align*}
\mathbb E\|z(t_1)\|_{L^2(\mathbb T^n)}^2
&\le\mathbb E\bigl[\Gamma(t_2)\|z(t_2)\|_{L^2(\mathbb T^n)}^2\bigr]
\le e^{C_1^2}\mathbb E\|z(t_2)\|_{L^2(\mathbb T^n)}^2,
\end{align*}
which proves \eqref{eq:energy-propagation}.
\end{proof}
\begin{proof}[Proof of \Cref{thm:state-observability}]
We follow the telescoping argument in \cite[proof of Theorem~1]{Apraiz2014}.

\textbf{Step 1.} We derive an estimate on each time interval.
With $\theta$ from \Cref{cor:one-time-interpolation}, set
\begin{align*}
\alpha&=\frac{1-\theta}{\theta},\qquad \rho_0=1-\frac\theta2,
\qquad t_k=T(1-\rho_0^k),\\
\delta_k&=t_{k+1}-t_k=T(1-\rho_0)\rho_0^k,
\qquad I_k=[t_k,t_k+\delta_k/2],\qquad k\geq0.
\end{align*}
Then $t_0=0$, $t_k\uparrow T$, and $\delta_{k+1}=\rho_0\delta_k$.
For $t\in I_k$, we have $t_{k+1}-t\geq\delta_k/2$, while
\Cref{lem:energy-propagation} gives
\begin{align*}
\lVert z(t_k)\rVert_{L^2_{\mathcal F_{t_k}}(\Omega;L^2(\mathbb T^n))}
\leq e^{C_1^2/2}
\lVert z(t)\rVert_{L^2_{\mathcal F_t}(\Omega;L^2(\mathbb T^n))}.
\end{align*}
Apply \eqref{eq:one-time-epsilon} with $s=t_{k+1}$ and with
$\varepsilon e^{-C_1^2/2}$ in place of $\varepsilon$.
Combining the two estimates gives
\begin{align*}
&\lVert z(t_k)\rVert_{L^2_{\mathcal F_{t_k}}(\Omega;L^2(\mathbb T^n))}
\leq\varepsilon
\lVert z(t_{k+1})\rVert_{L^2_{\mathcal F_{t_{k+1}}}(\Omega;L^2(\mathbb T^n))}
+C\varepsilon^{-\alpha}e^{2C/\delta_k}
\lVert z(t)\rVert_{L^2_{\mathcal F_t}(\Omega;L^2(G))},
\qquad t\in I_k.
\end{align*}
The first two norms are independent of $t$.  Integrating over $I_k$
and dividing by $|I_k|=\delta_k/2$, we obtain
\begin{align*}
&\lVert z(t_k)\rVert_{L^2_{\mathcal F_{t_k}}(\Omega;L^2(\mathbb T^n))}
\leq\varepsilon
\lVert z(t_{k+1})\rVert_{L^2_{\mathcal F_{t_{k+1}}}(\Omega;L^2(\mathbb T^n))}
+\frac{2C}{\delta_k}\varepsilon^{-\alpha}e^{2C/\delta_k}
\int_{I_k}\lVert z(t)\rVert_{L^2_{\mathcal F_t}(\Omega;L^2(G))}\,dt.
\end{align*}
Since $\delta_k^{-1}\leq e^{1/\delta_k}$, the factor from averaging
can be absorbed into the exponential.  Hence there are constants
$C,C_0>0$, depending only on $n,T,R,C_1,|G|$, such that
\begin{align}\label{eq:averaged-one-step}
&\lVert z(t_k)\rVert_{L^2_{\mathcal F_{t_k}}(\Omega;L^2(\mathbb T^n))}
\leq\varepsilon
\lVert z(t_{k+1})\rVert_{L^2_{\mathcal F_{t_{k+1}}}(\Omega;L^2(\mathbb T^n))}
+C\varepsilon^{-\alpha}e^{C_0/\delta_k}
\int_{I_k}\lVert z(t)\rVert_{L^2_{\mathcal F_t}(\Omega;L^2(G))}\,dt,
\qquad\varepsilon>0.
\end{align}
We keep $C_0$ fixed in the remaining argument.

\textbf{Step 2.} We choose weights that make the estimate telescope.
Set
\begin{align*}
C_2=(2-\theta)(C_0+1),\qquad
w_k=e^{-C_2/\delta_k},\qquad \varepsilon_k=\frac{w_{k+1}}{w_k}.
\end{align*}
The relation $\delta_{k+1}=\rho_0\delta_k$ gives
\begin{align*}
\varepsilon_k
=\exp\biggl(-\frac{C_2(\rho_0^{-1}-1)}{\delta_k}\biggr),
\qquad
1-\alpha(\rho_0^{-1}-1)
=1-\frac{1-\theta}{2-\theta}
=\frac1{2-\theta}.
\end{align*}
Consequently, the coefficient of the observation term satisfies
\begin{align*}
w_k\varepsilon_k^{-\alpha}e^{C_0/\delta_k}
&=\exp\biggl(
\frac{C_0-C_2+\alpha C_2(\rho_0^{-1}-1)}{\delta_k}\biggr)
=\exp\biggl(\frac{C_0-C_2/(2-\theta)}{\delta_k}\biggr)
=e^{-1/\delta_k}\leq1.
\end{align*}
Substitute $\varepsilon=\varepsilon_k$ into
\eqref{eq:averaged-one-step} and multiply by $w_k$.
Since $w_k\varepsilon_k=w_{k+1}$, we obtain
\begin{align*}
&w_k\lVert z(t_k)\rVert_{L^2_{\mathcal F_{t_k}}(\Omega;L^2(\mathbb T^n))}
-w_{k+1}\lVert z(t_{k+1})\rVert_{L^2_{\mathcal F_{t_{k+1}}}(\Omega;L^2(\mathbb T^n))} \\
&\leq Ce^{-1/\delta_k}
\int_{I_k}\lVert z(t)\rVert_{L^2_{\mathcal F_t}(\Omega;L^2(G))}\,dt \leq C\int_{I_k}
\lVert z(t)\rVert_{L^2_{\mathcal F_t}(\Omega;L^2(G))}\,dt.
\end{align*}

\textbf{Step 3.} We sum the estimates and pass to the limit.
Summing over $k=0,\ldots,m$ cancels all the intermediate weighted norms.
Since the intervals $I_k$ are disjoint subsets of $[0,T]$, this gives
\begin{align*}
&w_0\lVert z(0)\rVert_{L^2(\mathbb T^n)}
-w_{m+1}\lVert z(t_{m+1})\rVert_{L^2_{\mathcal F_{t_{m+1}}}(\Omega;L^2(\mathbb T^n))}\\
&\leq C\sum_{k=0}^m\int_{I_k}
\lVert  z(t)\rVert_{L^2_{\mathcal F_t}(\Omega;L^2(G))}\,dt \leq C\int_0^T
\lVert  z(t)\rVert_{L^2_{\mathcal F_t}(\Omega;L^2(G))}\,dt 
=C\lVert  z\rVert_{L^1_{\mathbb F}(0,T;L^2(\Omega;L^2(G)))}.
\end{align*}
By \eqref{eq:backward-solution-estimate},
\begin{align*}
0&\leq w_{m+1}
\lVert z(t_{m+1})\rVert_{L^2_{\mathcal F_{t_{m+1}}}(\Omega;L^2(\mathbb T^n))}
\leq C e^{-C_2/\delta_{m+1}}
\lVert\eta\rVert_{L^2_{\mathcal F_T}(\Omega;L^2(\mathbb T^n))}
\longrightarrow0
\qquad\text{as }m\to\infty,
\end{align*}
because $\delta_{m+1}\to0$.
Moreover, $\mathcal F_0$ is trivial up to null sets, so the norm of $z(0)$
above equals its $L^2(\mathbb T^n)$ norm.
Letting $m\to\infty$ and dividing by $w_0$ therefore yields
\begin{align}\label{eq:L1-observability-unsquared}
\lVert z(0)\rVert_{L^2(\mathbb T^n)}
\leq Cw_0^{-1}
\lVert   z\rVert_{L^1_{\mathbb F}(0,T;L^2(\Omega;L^2(G)))}.
\end{align}
Finally, $\delta_0=T\theta/2$ gives
$w_0^{-1}=\exp(2C_2/(T\theta))$.
Setting $C_{\mathrm{obs}}=Cw_0^{-1}$ proves \eqref{eq:state-observability}.
\end{proof}

\section{Null and approximate controllability by duality}\label{sec:duality}

We work over the real scalar field.  For each
$\eta\in L^2_{\mathcal F_T}(\Omega;L^2(\mathbb T^n))$, let
$(z^\eta,Z^\eta)$ solve \eqref{eq:backward} with terminal value $\eta$.
For every solution $y$ of \eqref{eq:forward}, we apply It\^o's formula to $(P_Ny,P_Nz^\eta)$.
By \eqref{eq:forward-solution-estimate}, \eqref{eq:backward-solution-estimate}, and
\eqref{eq:coefficient-budget}, the stochastic integrands belong to
$L^1_{\mathbb F}(\Omega;L^2(0,T))$, and the drift and endpoint terms converge as $N\to\infty$.
Taking expectations as above and passing to the limit gives the standard duality identity
\cite[Section~4.4]{Lue2021a}:
\begin{align}\label{eq:duality-identity}
\mathbb E\langle y(T),\eta\rangle_{L^2(\mathbb T^n)}
-\langle y_0,z^\eta(0)\rangle_{L^2(\mathbb T^n)}
=\mathbb E\int_0^T\langle u(t),z^\eta(t)\rangle_{L^2(G)}\,dt.
\end{align}
Here the coefficient terms cancel because
$\langle by,Z^\eta\rangle_{L^2(\mathbb T^n)}=\langle y,bZ^\eta\rangle_{L^2(\mathbb T^n)}$.

\begin{proof}[Proof of \Cref{thm:null-controllability}]
Fix $y_0\in L^2(\mathbb T^n)$.  Define the linear functional on the subspace
\begin{align*}
\mathcal X
=\{\mathbf{1}_G z^\eta:\eta\in L^2_{\mathcal F_T}(\Omega;L^2(\mathbb T^n))\}
\subset L^1_{\mathbb F}(0,T;L^2(\Omega;L^2(G))),
\end{align*}
as
\begin{align*}
\mathcal L(\mathbf{1}_G z^\eta)=-\langle y_0,z^\eta(0)\rangle_{L^2(\mathbb T^n)},\qquad \eta\in L^2_{\mathcal F_T}(\Omega;L^2(\mathbb T^n)).
\end{align*}
By \Cref{thm:state-observability}, we have
\begin{align*}
|\mathcal L(\mathbf{1}_G z^\eta)| = |\langle y_0,z^\eta(0)\rangle_{L^2(\mathbb T^n)}| \leq C_{\mathrm{obs}}\|y_0\|_{L^2(\mathbb T^n)} \|\mathbf{1}_G z^\eta \|_{L^1_{\mathbb F}(0,T;L^2(\Omega;L^2(G)))}, \qquad \forall\eta\in L^2_{\mathcal F_T}(\Omega;L^2(\mathbb T^n)).
\end{align*}
Hence,
$\mathcal L$ is a bounded linear functional with the norm
$\|\mathcal L\|\leq C_{\mathrm{obs}}\|y_0\|_{L^2(\mathbb T^n)}$.
The Hahn--Banach theorem extends $\mathcal L$ to the  
space $L^1_{\mathbb F}(0,T;L^2(\Omega;L^2(G)))$ without increasing its norm.  The representation theorem
\cite[Theorem~2.73]{Lue2021a}, with $p=1$, $q=2$, and $H=L^2(G;\mathbb R)$,
then gives a real
$u\in L^\infty_{\mathbb F}(0,T;L^2(\Omega;L^2(G)))$
satisfying \eqref{eq:control-cost} and
\begin{align*}
\mathbb E\int_0^T\langle u(t),z^\eta(t)\rangle_{L^2(G)}\,dt
=-\langle y_0,z^\eta(0)\rangle_{L^2(\mathbb T^n)},
\qquad\forall\eta\in L^2_{\mathcal F_T}(\Omega;L^2(\mathbb T^n)).
\end{align*}
This, together with \eqref{eq:duality-identity}, gives
\begin{align*}
	\mathbb E\langle y(T),\eta\rangle_{L^2(\mathbb T^n)}=0,
\qquad\forall\eta\in L^2_{\mathcal F_T}(\Omega;L^2(\mathbb T^n)).
\end{align*}
Taking $\eta=y(T)$ yields $y(T)=0$ almost surely.
\end{proof}

\begin{proof}[Proof of \Cref{thm:approximate-controllability}]
By linearity, it suffices to consider $y_0=0$.
Let $\eta\in L^2_{\mathcal F_T}(\Omega;L^2(\mathbb T^n))$ be orthogonal
to every terminal state reached by controls in
$L^\infty_{\mathbb F}(0,T;L^2(\Omega;L^2(G)))$.
By \eqref{eq:duality-identity},
\begin{align}\label{9.22-eq1}
\mathbb E\int_0^T\langle u(t),z^\eta(t)\rangle_{L^2(G)}\,dt=0, \qquad \forall u\in L^\infty_{\mathbb F}(0,T;L^2(\Omega;L^2(G))).
\end{align}
Taking $u=\mathbf{1}_G z^\eta$ in \eqref{9.22-eq1} gives
\begin{align*}
\|z^\eta(t)\|_{L^2_{\mathcal F_t}(\Omega;L^2(G))}=0
\qquad\text{for almost every }t<T.
\end{align*}
The interpolation estimate \eqref{eq:one-time-multiplicative}, with $s=T$,
implies that 
\begin{align}\label{9.22-eq2}
	\|z^\eta(t)\|_{L^2_{\mathcal F_t}(\Omega;L^2(\mathbb T^n))}=0
	\qquad\text{for almost every }t<T.
\end{align} 
Since $z^\eta(\cdot)\in L^2_{\mathbb F}(\Omega;C([0,T];L^2(\mathbb{T}^n)))$, we get from \eqref{9.22-eq2} that $\eta=z^\eta(T)=0$, $\mathbb P$-a.s.
Thus the reachable set has trivial orthogonal complement and is dense in
$L^2_{\mathcal F_T}(\Omega;L^2(\mathbb T^n))$, which proves the claim.
\end{proof}

\section{The analytic coefficient class: criterion and examples}\label{sec:richness}

We give a Fourier criterion for \Cref{ass:analytic-coefficient} and use
it to construct coefficients with arbitrarily high spatial frequencies
and infinitely many nonzero Fourier modes.

\begin{proposition}[Fourier realization of the analytic class]
\label{prop:richness-fourier}
Fix \(R>0\). Let \(\{a_{\mathbf k}\}_{\mathbf k\in\mathbb Z^n}\) be a family of complex-valued progressively measurable processes satisfying
\begin{align}\label{eq:fourier-conjugate-symmetry}
a_{-\mathbf k}(\omega,t)=\overline{a_{\mathbf k}(\omega,t)},
\qquad \mathbf k\in\mathbb Z^n,
\end{align}
and define
\begin{align}\label{eq:richness-majorant}
\mathcal C(\omega,t)
:=\sum_{\mathbf k\in\mathbb Z^n}|a_{\mathbf k}(\omega,t)|e^{R|\mathbf k|_1}.
\end{align}
Suppose that, for some \(C>0\),
\begin{align}\label{eq:richness-fourier-budget}
\|\mathcal C\|_{L^\infty_{\mathbb F}(\Omega;L^2(0,T))}^2\le C^2.
\end{align}
Then, after possibly modifying the \(a_{\mathbf k}\) on a common progressive \((\mathbb P\otimes dt)\)-null set, the series
\begin{align}\label{eq:richness-fourier-series}
\widetilde b(\omega,t,\zeta)
:=\sum_{\mathbf k\in\mathbb Z^n}a_{\mathbf k}(\omega,t)e^{\mathrm{i}\mathbf k\cdot \zeta}
\end{align}
admits a real trace \(b\) that satisfies \Cref{ass:analytic-coefficient} and
\begin{align}\label{eq:richness-beta-bound}
\|b(\omega,t,\cdot)\|_{\mathcal A_R(\mathbb T^n)}
\le \mathcal C(\omega,t)
\end{align}
for every \((\omega,t)\in\Omega\times(0,T)\).
\end{proposition}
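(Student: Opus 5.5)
Since \(\mathcal C\in L^\infty_{\mathbb F}(\Omega;L^2(0,T))\), there is a progressive \((\mathbb P\otimes dt)\)-null set \(\mathcal N\) outside of which \(\mathcal C(\omega,t)<\infty\). On \(\mathcal N\) we set all \(a_{\mathbf k}\equiv0\). This is a common progressive modification that preserves progressive measurability and the symmetry \eqref{eq:fourier-conjugate-symmetry}. After the modification, \(\mathcal C(\omega,t)<\infty\) for every \((\omega,t)\), and \eqref{eq:richness-fourier-budget} is unchanged because only a null set was altered.

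\textbf{Holomorphy, periodicity and the bound.} Fix \((\omega,t)\) and \(\zeta\in S_R^n\). Then
\[
|e^{\mathrm i\mathbf k\cdot\zeta}|=e^{-\mathbf k\cdot\operatorname{Im}\zeta}\le e^{R|\mathbf k|_1}.
\]
Hence the series \eqref{eq:richness-fourier-series} is dominated termwise by the convergent series \eqref{eq:richness-majorant}. It therefore converges absolutely and uniformly on \(S_R^n\), in fact on its closure. Each term is entire and \(2\pi\)-periodic in each coordinate, so by the Weierstrass theorem (applied in each variable, or via uniform limits of jointly holomorphic functions) the sum \(\widetilde b(\omega,t,\cdot)\) is holomorphic on \(S_R^n\) and periodic. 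The same domination gives
\[
\sup_{S_R^n}|\widetilde b|\le\mathcal C(\omega,t),
\]
so \(\widetilde b(\omega,t,\cdot)\in H^\infty_{\mathrm{per}}(S_R^n)\).

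\textbf{Reality of the trace.} On real \(x\), we pair the \(\mathbf k\) and \(-\mathbf k\) terms using \eqref{eq:fourier-conjugate-symmetry}:
\[
a_{-\mathbf k}e^{-\mathrm i\mathbf k\cdot x}=\overline{a_{\mathbf k}e^{\mathrm i\mathbf k\cdot x}}.
\]
Thus the absolutely convergent sum equals its own conjugate, and \(b(\omega,t,x):=\widetilde b(\omega,t,x)\) is real. Hence \(b(\omega,t,\cdot)\in\mathcal A_R(\mathbb T^n)\). Its extension is unique by the identity theorem, so \(\widetilde b\) is the extension in \Cref{ass:analytic-coefficient}. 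Since \(\|b\|_{\mathcal A_R}\) is the sup norm of this extension, \eqref{eq:richness-beta-bound} follows for every \((\omega,t)\). Then \(\beta\le\mathcal C\), and \eqref{eq:richness-fourier-budget} yields \eqref{eq:coefficient-budget} with \(C_1=C\).

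\textbf{Measurability.} This is the step needing some care. Each partial sum over \(|\mathbf k|_\infty\le N\) is \(\mathbb F\otimes\mathcal B(S_R^n)\)-measurable, because it is a finite sum of products of progressive processes with continuous functions of \(\zeta\). The pointwise limit of these partial sums is \(\widetilde b\), which is therefore \(\mathbb F\otimes\mathcal B(S_R^n)\)-measurable. Restricting to real \(\zeta\) shows that \(b\) is progressively measurable on \(\Omega\times(0,T)\times\mathbb T^n\).
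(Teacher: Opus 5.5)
Your proof is correct and follows essentially the same route as the paper: you zero the $a_{\mathbf k}$ on the progressive null set $\{\mathcal C=\infty\}$, dominate termwise by $|a_{\mathbf k}|e^{R|\mathbf k|_1}$ to get uniform convergence, holomorphy, periodicity and $\sup_{S_R^n}|\widetilde b|\le\mathcal C$, use conjugate symmetry for reality of the trace, and obtain measurability of $\widetilde b$ as a pointwise limit of the partial sums. The paper additionally makes explicit two points you leave implicit: $\mathcal C$ is progressively measurable as an increasing limit of finite sums, and $\beta$ is progressively measurable as the supremum of $|\widetilde b|$ over a fixed countable dense subset of $S_R^n$, which is needed for \eqref{eq:coefficient-budget} to make sense.
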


\begin{proof}
The finite sums in \eqref{eq:richness-majorant} increase to \(\mathcal C(\cdot,\cdot)\), so \(\mathcal C(\cdot,\cdot)\) is progressively measurable. By \eqref{eq:richness-fourier-budget}, the set \(\{\mathcal C=\infty\}\) is a progressive \((\mathbb P\otimes dt)\)-null set. We set each \(a_{\mathbf k}\) equal to zero on this common set. Conjugate symmetry is preserved, and the modified weighted sum, still denoted by \(\mathcal C(\cdot,\cdot)\), is finite everywhere and still satisfies \eqref{eq:richness-fourier-budget}.

\textbf{Step 1.} We first establish uniform convergence on the polystrip. Fix \((\omega,t)\) and define
\begin{align*}
\widetilde b_N(\omega,t,\zeta)
=\sum_{|\mathbf k|_\infty\le N}a_{\mathbf k}(\omega,t)e^{\mathrm{i}\mathbf k\cdot \zeta},
\qquad N\in\mathbb N.
\end{align*}
For \(\zeta=x+\mathrm{i}y\in S_R^n\),
\begin{align*}
|e^{\mathrm{i}\mathbf k\cdot \zeta}|
=e^{-\mathbf k\cdot y}
\le e^{\sum_{j=1}^n|k_j||y_j|}
\le e^{R|\mathbf k|_1}.
\end{align*}
Consequently, for \(M>N\),
\begin{align*}
\sup_{\zeta\in S_R^n}
|\widetilde b_M(\omega,t,\zeta)-\widetilde b_N(\omega,t,\zeta)|
&\le\sum_{N<|\mathbf k|_\infty\le M}
|a_{\mathbf k}(\omega,t)|e^{R|\mathbf k|_1}\\
&\le\sum_{|\mathbf k|_\infty>N}
|a_{\mathbf k}(\omega,t)|e^{R|\mathbf k|_1}
\to 0
\qquad\text{as }N\to\infty.
\end{align*}
The last limit follows from \(\mathcal C(\omega,t)<\infty\) and is uniform in \(M>N\). Thus \eqref{eq:richness-fourier-series} converges absolutely and uniformly on \(S_R^n\) for each fixed \((\omega,t)\). Each partial sum is holomorphic and \(2\pi\)-periodic in each coordinate, so these properties pass to the limit. Moreover,
\begin{align*}
\sup_{\zeta\in S_R^n}|\widetilde b(\omega,t,\zeta)|
\le\sum_{\mathbf k\in\mathbb Z^n}|a_{\mathbf k}(\omega,t)|e^{R|\mathbf k|_1}
=\mathcal C(\omega,t).
\end{align*}

\textbf{Step 2.} We now verify the coefficient assumptions. For \(x\in\mathbb T^n\), conjugate symmetry and the change of index \(\mathbf k\mapsto-\mathbf k\) yield
\begin{align*}
\overline{\widetilde b_N(\omega,t,x)}
&=\sum_{|\mathbf k|_\infty\le N}
\overline{a_{\mathbf k}(\omega,t)}e^{-\mathrm{i}\mathbf k\cdot x}
=\sum_{|\mathbf k|_\infty\le N}
a_{-\mathbf k}(\omega,t)e^{-\mathrm{i}\mathbf k\cdot x}
=\widetilde b_N(\omega,t,x).
\end{align*}
Passing to the limit shows that the real trace \(b(\omega,t,x)=\widetilde b(\omega,t,x)\) is real-valued. Each \(\widetilde b_N\) is \(\mathbb F\otimes\mathcal B(S_R^n)\)-measurable, since its coefficients are progressively measurable and its spatial factors are continuous. The pointwise limit \(\widetilde b\) has the same measurability, and its restriction \(b\) to the real torus is progressively measurable.

We have therefore shown that \(b(\omega,t,\cdot)\in\mathcal A_R(\mathbb T^n)\) for every \((\omega,t)\), and that
\begin{align*}
\beta(\omega,t)
=\|b(\omega,t,\cdot)\|_{\mathcal A_R(\mathbb T^n)}
=\sup_{\zeta\in S_R^n}|\widetilde b(\omega,t,\zeta)|
\le \mathcal C(\omega,t).
\end{align*}
By continuity in \(\zeta\), this supremum can be taken over a fixed countable dense subset of \(S_R^n\), so \(\beta\) is progressively measurable. Finally, \eqref{eq:richness-fourier-budget} yields
\begin{align*}
\|\beta\|_{L^\infty_{\mathbb F}(\Omega;L^2(0,T))}
\le\|\mathcal C\|_{L^\infty_{\mathbb F}(\Omega;L^2(0,T))}
\le C.
\end{align*}
This proves \eqref{eq:richness-beta-bound} and all the requirements of \Cref{ass:analytic-coefficient}.
\end{proof}

By choosing finitely many nonzero coefficients in \Cref{prop:richness-fourier}, one obtains random trigonometric polynomials with arbitrary frequencies. A deterministic conjugate-symmetric sequence with infinitely many nonzero coefficients and finite weighted sum in \eqref{eq:richness-majorant} yields a profile that is not a trigonometric polynomial. Consequently, \Cref{ass:analytic-coefficient} encompasses an infinite-dimensional class of spatial profiles, whose multiplication operators generally mix Fourier modes.

\begin{example}[A random, time- and space-dependent coefficient]
Let \(M>0\) and define
\begin{align*}
\widetilde b(\omega,t,\zeta)
={}&\frac M2\tanh(W(t))
+\frac{M}{2n\cosh R}
\tanh\Bigl(\int_0^tW(r)\,dr\Bigr)
\sum_{j=1}^n\cos\zeta_j.
\end{align*}
The random amplitudes are bounded and progressively measurable, which ensures the required joint measurability of \(\widetilde b\).
Since \(|\cos\zeta_j|\le\cosh R\) on \(S_R^n\), the real trace \(b(\omega,t,\cdot)\) of \(\widetilde b(\omega,t,\cdot)\) belongs to \(\mathcal A_R(\mathbb T^n)\) and satisfies
$
\|b(\omega,t,\cdot)\|_{\mathcal A_R(\mathbb T^n)}\le M$. 
Hence \Cref{ass:analytic-coefficient} holds with \(C_1=M\sqrt T\).
\end{example}

\begin{example}[A random coefficient with infinitely many Fourier modes]
Let $M>0$. For $\zeta\in S_R^n$, set
\begin{align*}
F_R(\zeta)
&=\frac1n\sum_{j=1}^n\sum_{m=1}^{\infty}
2^{-m}e^{-mR}\cos(m\zeta_j),
\qquad
A(t)=\frac{2+\tanh\bigl(\int_0^tW(r)\,dr\bigr)}3.
\end{align*}
We define the coefficient by
\begin{align*}
\widetilde b(\omega,t,\zeta)
=\frac M2\tanh(W(t))+\frac M2 A(t)F_R(\zeta).
\end{align*}
The random amplitudes are bounded and progressively measurable, with $1/3<A(t)<1$.
Since $2^{-m}e^{-mR}|\cos(m\zeta_j)|\le 2^{-m}$ on $S_R^n$, the series defining $F_R$ converges absolutely and uniformly to a periodic holomorphic function with real trace and supremum norm at most $1$.
The partial sums ensure the required joint measurability of $\widetilde b$.
Thus the real trace $b(\omega,t,\cdot)$ belongs to $\mathcal A_R(\mathbb T^n)$ and satisfies
$
\|b(\omega,t,\cdot)\|_{\mathcal A_R(\mathbb T^n)}
\le\frac M2|\tanh(W(t))|+\frac M2 A(t)\le M$.
Hence \Cref{ass:analytic-coefficient} holds with $C_1=M\sqrt T$.
Since $A(t)>1/3$, the Fourier coefficients at $\pm m\mathbf e_j$ are nonzero for every $m\ge1$ and $1\le j\le n$.
\end{example}

The analytic norm may also be unbounded in time for a profile with
infinitely many nonzero Fourier modes. With $F_R$ as above, set
\begin{align*}
b(t,x)=M(T-t)^{-1/4}F_R(x),
\qquad 0<t<T.
\end{align*}
Then $b(t,\cdot)\in\mathcal A_R(\mathbb T^n)$ and
\begin{align*}
\beta(t)\le M(T-t)^{-1/4},
\qquad
\int_0^T\beta(t)^2\,dt\le2M^2\sqrt T.
\end{align*}
Hence \Cref{ass:analytic-coefficient} holds with
$C_1^2=2M^2\sqrt T$. Since $F_R(0)>0$, this coefficient is genuinely
unbounded as $t\uparrow T$.

\section{Concluding remarks}

In this paper we established null and approximate controllability for a stochastic parabolic equation with a space-dependent analytic noise coefficient, together with an observability inequality with an \(L^1\)-in-time mean-square observation norm. The examples in Section \ref{sec:richness} show that \Cref{ass:analytic-coefficient} accommodates random trigonometric polynomials, profiles with infinitely many nonzero Fourier modes, and coefficients that are unbounded in time. These coefficients must nevertheless admit bounded holomorphic extensions to a complex strip of fixed positive width, with the corresponding analytic norms satisfying a square-integrability bound in time that is uniform over sample paths. Having shown that the analytic coefficient class is rich, we now discuss the limitations of the analyticity assumption and indicate several directions for further study.

The core of our proof follows the chain
\[
\begin{aligned}
&\text{analytic energy estimate (Theorem \ref{thm:analytic-smoothing})} \;\longrightarrow\; \text{propagation of smallness (Proposition \ref{prop:hilbert-propagation})} \\
&\longrightarrow\; \text{one-time interpolation (Corollary \ref{cor:one-time-interpolation})} \;\longrightarrow\; \text{observability (Theorem \ref{thm:state-observability})}.
\end{aligned}
\]
The key step that propagates information from the local observation set \(G\) to the whole torus \(\mathbb{T}^n\) is the analytic propagation of smallness used in Proposition \ref{prop:hilbert-propagation}. Its proof rests on the multidimensional analytic propagation theorem  \cite[Theorem~4]{Apraiz2014}, whose essence is that an analytic function on its domain of convergence is uniquely determined by its values on any subset of positive measure.

A natural question is whether the spatial analyticity condition can be relaxed to Gevrey regularity of order greater than $1$. The current proof cannot directly cover this case, and there are mainly two obstacles. First, general non-analytic Gevrey functions do not possess holomorphic extensions in a neighborhood, and therefore Gevrey regularity alone cannot guarantee the holomorphic extensions and multiplier identities required for the complex translation in this paper. Even if almost-analytic extensions are used, they cannot directly replace the holomorphic extensions employed in the proof. Second, Gevrey classes of order greater than 1 contain nonzero smooth functions supported in a local region, and these functions can vanish identically on a nonempty open set. Consequently, the propagation-of-smallness estimate of the same type given in Proposition \ref{prop:hilbert-propagation}, which controls the global norm from an arbitrary set of positive measure, cannot hold universally for the entire space of such Gevrey functions. This obstacle already exists in the scalar case and is not caused by the generalization to Hilbert space-valued functions.

\appendix

\section{Proof of \Cref{lem:galerkin-identification}}\label{app:galerkin}

The proof of \Cref{lem:galerkin-identification} follows a standard
Galerkin stability argument.  We include it here for completeness.

\begin{proof}[Proof of \Cref{lem:galerkin-identification}]
Fix \(s\in(0,T]\), and let \((z_N,Z_N)\) be the solution of \eqref{eq:galerkin-bsde} with \(\xi=z(s)\).
Set \(e_N=z_N-P_Nz\) and \(h_N=Z_N-P_NZ\). Subtracting the projection of \eqref{eq:backward} from \eqref{eq:galerkin-bsde} yields
\begin{equation*}
\begin{cases}
	\begin{aligned}
		&d e_N
		=(-\Delta e_N-P_N(bh_N)+f_N)\,dr+h_N\,dW(r)
		&&\text{in }(0,s),\\
		&e_N(s)=0,
	\end{aligned}
\end{cases}
\end{equation*}
where \(f_N=P_N\bigl(b(Z-P_NZ)\bigr)\).
By \eqref{eq:coefficient-budget} and the strong convergence of \(P_N\),
\begin{align}\label{eq:galerkin-residual}
\mathbb{E}\Bigl(\int_0^s\|f_N(r)\|_{L^2(\mathbb{T}^n)}\,dr\Bigr)^2
\leq C_1^2\mathbb{E}\int_0^s\|Z(r)-P_NZ(r)\|_{L^2(\mathbb{T}^n)}^2\,dr
\longrightarrow0.
\end{align}

A standard adaptation of the stability argument in
\cite[Section~4.4]{Lue2021a}, using
\eqref{eq:real-axis-bound} and \eqref{eq:coefficient-budget}, yields
\begin{align*}
\mathbb E\sup_{0\leq r\leq s}\|e_N(r)\|_{L^2(\mathbb{T}^n)}^2
\leq C\mathbb E\Bigl(\int_0^s\|f_N(r)\|_{L^2(\mathbb{T}^n)}\,dr\Bigr)^2,
\end{align*}
where, throughout this proof, \(C\) depends only on \(T\) and \(C_1\).
To estimate the integral norms, define
\(\gamma(r)=\exp\bigl(2\int_0^r\beta(\tau)^2\,d\tau\bigr)\).
It\^o's formula gives
\begin{align*}
d \bigl(\gamma\|e_N\|_{L^2(\mathbb{T}^n)}^2\bigr)
={}&\gamma\bigl(2\|\nabla e_N\|_{L^2(\mathbb{T}^n)}^2
+\|h_N\|_{L^2(\mathbb{T}^n)}^2
+2\beta^2\|e_N\|_{L^2(\mathbb{T}^n)}^2  -2\operatorname{Re}\langle bh_N,e_N\rangle_{L^2(\mathbb{T}^n)}\\
& \quad
+2\operatorname{Re}\langle f_N,e_N\rangle_{L^2(\mathbb{T}^n)}\bigr)\,dr
+2\gamma\operatorname{Re}\langle e_N,h_N\rangle_{L^2(\mathbb{T}^n)}\,dW(r).
\end{align*}
Since $1\leq\gamma\leq e^{2C_1^2}$, the same integrability argument as in the proof of
\Cref{prop:finite-analytic-energy} shows that the stochastic integral has zero expectation. Using
\begin{align*}
2|\langle bh_N,e_N\rangle_{L^2(\mathbb{T}^n)}|
\leq\tfrac12\|h_N\|_{L^2(\mathbb{T}^n)}^2
+2\beta^2\|e_N\|_{L^2(\mathbb{T}^n)}^2,
\end{align*}
integrating from \(0\) to \(s\), and taking expectations, noting that \(e_N(s)=0\), we obtain
from the Cauchy--Schwarz inequality for the forcing term, together with the
preceding stability estimate and \(1\leq\gamma\leq e^{2C_1^2}\),
\begin{align}\label{eq:galerkin-error-estimate}
&\mathbb{E}\sup_{0\leq r\leq s}\|e_N(r)\|_{L^2(\mathbb{T}^n)}^2
+\mathbb{E}\int_0^s\bigl(\|\nabla e_N(r)\|_{L^2(\mathbb{T}^n)}^2
+\|h_N(r)\|_{L^2(\mathbb{T}^n)}^2\bigr)\,dr \leq C\mathbb{E}\Bigl(\int_0^s\|f_N(r)\|_{L^2(\mathbb{T}^n)}\,dr\Bigr)^2.
\end{align}
Combining \eqref{eq:galerkin-residual} and \eqref{eq:galerkin-error-estimate}
yields 
$$
\lim_{N\to\infty}e_N=0 \qquad\mbox{ in }L^2_{\mathbb F}(\Omega;C([0,s];L^2(\mathbb{T}^n)))
\cap L^2_{\mathbb F}(0,s;H^1(\mathbb{T}^n))
$$
and
$$
\lim_{N\to\infty}h_N=0  \qquad\mbox{ in }  L^2_{\mathbb F}(0,s;L^2(\mathbb{T}^n)).
$$
Consequently, \eqref{eq:galerkin-strong-time-1} holds.
\end{proof}

\bibliographystyle{siam-paper-no}
\bibliography{references}

@Article{Tang2009,
  author     = {Tang, Shanjian and Zhang, Xu},
  journal    = {SIAM J. Control Optim.},
  title      = {Null controllability for forward and backward stochastic parabolic equations},
  year       = {2009},
  issn       = {0363-0129},
  number     = {4},
  pages      = {2191--2216},
  volume     = {48},
  doi        = {10.1137/050641508},
  fjournal   = {SIAM Journal on Control and Optimization},
  mrclass    = {93B05 (35R60 60H15 93C20 93E03)},
  mrnumber   = {2520325},
}

@Article{Yang2016a,
  author   = {Yang, Donghui and Zhong, Jie},
  journal  = {SIAM J. Control Optim.},
  title    = {Observability inequality of backward stochastic heat equations for measurable sets and its applications},
  year     = {2016},
  issn     = {0363-0129},
  number   = {3},
  pages    = {1157--1175},
  volume   = {54},
  doi      = {10.1137/15M1033289},
  fjournal = {SIAM Journal on Control and Optimization},
}

@Article{Apraiz2014,
  author   = {Apraiz, Jone and Escauriaza, Luis and Wang, Gengsheng and Zhang, C.},
  journal  = {J. Eur. Math. Soc. (JEMS)},
  title    = {Observability inequalities and measurable sets},
  year     = {2014},
  issn     = {1435-9855},
  number   = {11},
  pages    = {2433--2475},
  volume   = {16},
  doi      = {10.4171/JEMS/490},
  fjournal = {Journal of the European Mathematical Society},
}

@Article{Liu2026,
  author   = {Liu, Yuanhang and Wu, Weijia and Yang, Donghui and Zhong, Jie},
  title    = {Observability Inequalities for the Backward Stochastic Evolution Equations and Their Applications},
  journal  = {SIAM J. Control Optim.},
  year     = {2026},
  volume   = {64},
  number   = {2},
  pages    = {727--747},
  doi      = {10.1137/23M159576X},
  fjournal = {SIAM Journal on Control and Optimization},
}

@Article{Lu2011,
  author   = {L{\"u}, Qi},
  title    = {Some results on the controllability of forward stochastic heat equations with control on the drift},
  journal  = {J. Funct. Anal.},
  year     = {2011},
  volume   = {260},
  number   = {3},
  pages    = {832--851},
  doi      = {10.1016/j.jfa.2010.10.018},
  fjournal = {Journal of Functional Analysis},
}

@Article{HernandezSantamaria2023,
  author    = {Hern{\'a}ndez-Santamar{\'\i}a, V{\'\i}ctor and Le Balc'h, K{\'e}vin and Peralta, Liliana},
  title     = {Global null-controllability for stochastic semilinear parabolic equations},
  journal   = {Ann. Inst. H. Poincar{\'e} Anal. Non Lin{\'e}aire},
  year      = {2023},
  volume    = {40},
  number    = {6},
  pages     = {1415--1455},
  issn      = {1873-1430},
  doi       = {10.4171/AIHPC/69},
  fjournal  = {Annales de l'Institut Henri Poincar{\'e} C, Analyse non lin{\'e}aire},
}

@Article{Liu2014,
  author   = {Liu, Xu},
  title    = {Global {C}arleman estimate for stochastic parabolic equations, and its application},
  journal  = {ESAIM Control Optim. Calc. Var.},
  year     = {2014},
  volume   = {20},
  number   = {3},
  pages    = {823--839},
  issn     = {1292-8119},
  doi      = {10.1051/cocv/2013085},
  fjournal = {ESAIM. Control, Optimisation and Calculus of Variations},
}

@Book{Lue2021a,
  author    = {L{\"u}, Qi and Zhang, Xu},
  title     = {Mathematical Control Theory for Stochastic Partial Differential Equations},
  series    = {Probability Theory and Stochastic Modelling},
  volume    = {101},
  publisher = {Springer},
  address   = {Cham},
  year      = {2021},
  pages     = {xiii+592},
  isbn      = {978-3-030-82330-6},
  doi       = {10.1007/978-3-030-82331-3},
}

@Article{Barbu2003,
  author   = {Barbu, Viorel and R{\u{a}}{\c{s}}canu, Aurel and Tessitore, Gianmario},
  title    = {Carleman estimates and controllability of linear stochastic heat equations},
  journal  = {Appl. Math. Optim.},
  fjournal = {Applied Mathematics and Optimization},
  year     = {2003},
  volume   = {47},
  number   = {2},
  pages    = {97--120},
  issn     = {0095-4616},
  doi      = {10.1007/s00245-002-0757-z},
}

@Article{ZhangXuLiu2026,
  author   = {Zhang, Lei and Xu, Fan and Liu, Bin},
  title    = {New global {Carleman} estimates and null controllability for forward/backward semilinear parabolic {SPDEs}},
  journal  = {SIAM J. Control Optim.},
  fjournal = {SIAM Journal on Control and Optimization},
  year     = {2026},
  volume   = {64},
  number   = {3},
  pages    = {1297--1326},
  doi      = {10.1137/25M1770643},
}

@InCollection{Zuazua2007,
  author    = {Zuazua, Enrique},
  title     = {Controllability and observability of partial differential equations: some results and open problems},
  booktitle = {Handbook of Differential Equations: Evolutionary Equations. {V}ol. {III}},
  publisher = {Elsevier/North-Holland},
  address   = {Amsterdam},
  year      = {2007},
  pages     = {527--621},
  doi       = {10.1016/S1874-5717(07)80010-7},
}

@InCollection{FernandezCara2012,
  author    = {Fern{\'a}ndez-Cara, Enrique},
  title     = {The control of {PDEs}: some basic concepts, recent results and open problems},
  booktitle = {Topics in Mathematical Modeling and Analysis},
  editor    = {Kaplick{\'y}, Petr},
  series    = {Jind{\v r}ich Ne{\v c}as Cent. Math. Model. Lect. Notes},
  volume    = {7},
  publisher = {Matfyzpress},
  address   = {Prague},
  year      = {2012},
  pages     = {49--107},
  isbn      = {978-80-7378-196-5},
  url       = {https://www.karlin.mff.cuni.cz/~prusv/ncmm/notes/download/volume-vii.pdf},
}

@Book{LeRousseauLebeauRobbiano2022,
  author    = {{Le Rousseau}, J{\'e}r{\^o}me and Lebeau, Gilles and Robbiano, Luc},
  title     = {Elliptic {Carleman} Estimates and Applications to Stabilization and Controllability, Volume {I}: {Dirichlet} Boundary Conditions on {Euclidean} Space},
  series    = {Progr. Nonlinear Differential Equations Appl.},
  volume    = {97},
  publisher = {Birkh{\"a}user},
  address   = {Cham},
  year      = {2022},
  pages     = {viii+411},
  isbn      = {978-3-030-88673-8},
  doi       = {10.1007/978-3-030-88674-5},
  note      = {{PNLDE} Subseries in Control},
}

\end{document}